\definecolor{mygreen}{RGB}{28,172,0} 
\definecolor{mylilas}{RGB}{170,55,241}
\definecolor{mygray}{RGB}{230,230,230}
\definecolor{mygray2}{RGB}{200,200,200}
\renewcommand{\le}{\leq}
\renewcommand{\ge}{\geq}
\DeclareMathOperator{\fl}{fl}
\newcommand{\w}{{_{_\mathcal W}}}
\newcommand{\qt}{{_{_\mathcal{QT}}}}
\newcommand{\toolbox}{\texttt{cqt-toolbox}}
\DeclareMathOperator*{\dotle}{\begin{array}{c}\cdot\\[-0.2cm] \le\end{array}}
\newtheorem{proposition}{Proposition}[section]
\newtheorem{lemma}[proposition]{Lemma}
\title{Quasi-Toeplitz matrix arithmetic: a MATLAB toolbox\thanks{This work has been supported by the GNCS/INdAM project 2018 ``Tecniche innovative
		per problemi di algebra lineare''. The authors are
  members of the research group GNCS.}}
\author{%
	Dario A. Bini\thanks{Dipartimento di Matematica, Pisa, Italy,
		{\tt bini@dm.unipi.it}} 
	\and
	Stefano Massei\thanks{EPF Lausanne, Switzerland,
		{\tt stefano.massei@epfl.ch}} 
	\and
	Leonardo Robol\thanks{Institute of Information Science and
		Technologies ``A. Faedo'', CNR, Pisa, Italy, 
		{\tt leonardo.robol@isti.cnr.it}}
}
\newtheorem{theorem}[proposition]{Theorem}
\newtheorem{corollary}[proposition]{Corollary}
\theoremstyle{definition}
\newtheorem{definition}[proposition]{Definition}
\theoremstyle{remark}
\newtheorem{remark}[proposition]{Remark}
\DeclarePairedDelimiter{\norm}{\lVert}{\rVert}
\newcommand{\matlab}{\texttt{MATLAB}}
\renewcommand{\leq}{\leqslant}
\renewcommand{\geq}{\geqslant}
\renewcommand{\tilde}{\widetilde}
\begin{document}
\maketitle

\begin{abstract}
	A Quasi Toeplitz (QT) matrix is a semi-infinite matrix of the
	kind $A=T(a)+E$ where $T(a)=(a_{j-i})_{i,j\in\mathbb Z^+}$,
	$E=(e_{i,j})_{i,j\in\mathbb Z^+}$ is compact and the norms
	$\|a\|\w=\sum_{i\in\mathbb Z}|a_i|$ and $\|E\|_2$ are finite.
	These properties allow to approximate any QT-matrix,
	within any given precision, by means of a finite number of
	parameters.
	
	QT-matrices, equipped with the norm
	$\|A\|\qt=\alpha\|a\|\w+\|E\|_2$, for $\alpha = (1+\sqrt
	5)/2$, are a Banach algebra with the standard arithmetic
	operations. We provide an algorithmic description of these
	operations on the finite parametrization of QT-matrices, 
	and we develop a \matlab\ toolbox implementing them in a transparent way. 
	The toolbox is then extended to perform arithmetic operations on matrices
	of finite size that have a Toeplitz plus low-rank structure. This enables
	the development of algorithms for Toeplitz and quasi-Toeplitz matrices
	whose cost does not necessarily increase with the dimension of the problem. 
	
	Some examples of applications to computing matrix functions and 
	to solving matrix equations	are presented, and confirm the effectiveness
	of the approach.
\end{abstract}

\section{Introduction}

Toeplitz matrices, i.e., matrices having constant entries along their diagonals, are found in diverse settings of applied mathematics, ranging
from imaging to Markov chains, and from finance to the solution of PDEs. 
These matrices can be of large size, and often they are infinite
or semi-infinite in the original mathematical model. 

As shown in \cite{bottcher2012introduction}, semi-infinite Toeplitz matrices
do not form an algebra; in particular, neither product nor inverses of 
semi-infinite Toeplitz matrices are still Toeplitz structured in general. However, 
this property continues to hold up to a compact operator from $\ell^2$ onto itself, where $\ell^2$ is the linear space formed by sequences $x=(x_i)_{i>0}$ such that $\|x\|_2:=(\sum_{i=1}^{+\infty} |x_i |^2)^{1/2}<+\infty$. More
precisely, the set of semi-infinite Toeplitz matrices plus a compact
$\ell^2$ operator is a Banach algebra, that is, 
a Banach space with the $\ell^2$ operator norm, closed under matrix multiplication, where the associated operator norm is sub-multiplicative.
We refer to such matrices as 
\emph{Quasi-Toeplitz matrices}, in short QT matrices. Their computational properties have
been investigated in \cite{bini2016semi,bini2016aqt2,biniquadratic,bm:exp}. 

We provide a description of finitely representable QT matrices, together with the analysis of the computational properties of their arithmetic, moreover we provide an implementation of QT matrices in
the form of a \matlab\ toolbox called {\tt cqt-toolbox}
(fully compatible with GNU/Octave), where the
acronym {\tt cqt} stands for ``Computing with Quasi Toeplitz matrices'',
and show some examples of applications. The toolbox can be downloaded from
\url{https://github.com/numpi/cqt-toolbox}. 

\subsection{Motivation}
Matrices of infinite size are encountered in several 
applications which describe the behavior of systems with
a countable number of states, and more generally whenever
infinite dimensional objects are involved. Typical
examples come from queuing models where the number of states of the
stochastic process is infinitely countable, say, it can be
represented by the set $\mathbb Z$ of relative integers or by the set
$\mathbb Z^+$ of positive integers, so that the probability transition
matrix is bi-infinite or semi-infinite, respectively. In other models,
like the random walk in the quarter plane \cite{miyazawa2011light},
\cite{kobayashi2012revisit}, in the QBD processes \cite{lr:book}, and in
the more general MG1 and GM1 queues \cite{neuts81}, the set of states
is discrete and bidimensional, i.e., defined by integer pairs $(i,j)$ where at
least one component ranges in an infinite set.  Sometimes, these pairs
belong to $\mathbb Z\times\mathbb Z^+$ or to $\mathbb Z^+\times\mathbb
Z^+$.  In these cases, the probability transition matrix has a block
structure with infinitely many blocks and with blocks which have
infinite size.

A typical feature shared by many models is that --- sufficiently far form the border --- the transitions from a state to another
depend on their relative positions and are independent of
the single state, see for instance the tandem Jackson queue
\cite{jackson1957networks} or the random walk in the quarter plane
analyzed in \cite{miyazawa2011light ,kobayashi2012revisit}. In
these situations, the transition probability matrix is block Toeplitz almost everywhere
and  its blocks are Toeplitz except for some elements in the upper left corner. In particular, the blocks
can be written in the form $ T(a)+E $, where $T(a)=(a_{j-i})$ is the
Toeplitz matrix associated with the sequence $a=\{a_i\}_{i\in\mathbb
	Z}$, while $E$ is a matrix having only a finite number of nonzero
entries containing the information concerning the boundary
conditions. The computation of interesting quantities related to these models, e.g., the steady state vector, requires to solve quadratic matrix equations whose coefficients are given by the blocks of the transition probability matrix. 

The numerical treatment of problems involving infinite matrices is usually performed by
truncating the size to a finite large value, by
solving the finite problem obtained this way and using this finite
solution to approximate part of the solution of the infinite problem.

In \cite{lindner2006infinite} the author analyzes this approach
--- called the \emph{finite section method} --- for infinite linear systems, providing conditions that ensure
the solution of the truncated system to converge to the solution of the
infinite one, as the size of the section tends to $+\infty$.  The analogous strategy can be adopted for solving matrix equations or computing
matrix functions, but --- in general --- there is no guarantee of
convergence. In fact, in
\cite{latouche2002truncation,bean2010approximations,latouche2011queues}
bad effects of truncation are highlighted when solving infinite
quadratic matrix equations arising in the Markov chains framework.
In \cite{kapo} a method is designed for a subclass
of bidimensional random walks where the solution can be represented
in a special form. In particular, the authors point out the
difficulty to apply the matrix geometric method of Marcel Neuts
\cite{neuts81}, and therefore of solving a quadratic matrix equation,
due to the infinite size of the matrix coefficients and of the
solution.

Recently, a different approach has been introduced by studying
structures that allow finitely represented approximations of infinite matrices and that are preserved by matrix operations. Working with this kind of structured matrices does not require to truncate to finite size in order to carry out computations.

In \cite{bini2016semi,bini2016aqt2,biniquadratic, bm:exp}, the class $\mathcal{QT}$ of
semi-infinite \emph{Quasi-Toeplitz} (QT) matrices has been
introduced. This set is formed by matrices of the kind $A=T(a)+E$
where, in general, $a(z)=\sum_{i\in\mathbb Z} a_iz^i$ is a Laurent series such that
$\|a\|\w=\sum_{i=-\infty}^{+\infty}|a_i|$ is finite, and $E$ is a
compact correction.
Each element of this class can be approximated --- at any
arbitrary precision --- with the sum of a banded Toeplitz $T(\widetilde
a)$ plus a matrix $\widetilde E$ with finite support. 
QT-matrices form a Banach algebra and enable the implementation of an approximate
matrix arithmetic which operates on the elements of the class. Using this
tool, one can deal with certain classical linear algebra issues just
plugging the new arithmetic into the procedures designed for matrices of
finite size.

Another intriguing aspect of QT-arithmetic is that it can be easily adapted to finite matrices of the form Toeplitz plus low-rank. This paves the way for efficiently computing functions of Toeplitz matrices, which has been recently raised some attention. See \cite{gutierrez2007functions} for applications concerning signal processing. In \cite{lee2010shift,kressner2016fast}
computing the matrix exponential
of large Toeplitz matrices is required for option pricing with the
Merton model. 

\subsection{New contributions}
In this paper, by continuing the work started in
\cite{bini2016semi,bini2016aqt2,biniquadratic,bm:exp}, we  analyze
the representation of QT matrices by means of a finite number of parameters, in a sort of analogy with the finite floating point representation of real numbers. Moreover, we investigate some computational issues related to the definition and the implementation of a matrix arithmetic in this class. 
Finally, we provide an effective implementation of the class of finitely
representable QT matrices together with the related matrix arithmetic
in the MATLAB toolbox \toolbox.

In order to perform approximations of QT matrices with finitely representable matrices, we introduce  the following norm
\[
\|A\|\qt=\alpha\|a\|\w+\|E\|_2,\quad \alpha=\frac{1+\sqrt 5}{2}.
\]
This norm is different from the one used in 
\cite{bini2016semi,bini2016aqt2,biniquadratic,bm:exp}:
it is slightly more general, and still makes the set $\mathcal{QT}$ a Banach algebra. It can be shown 
that any value of $\alpha \geq \frac{1+\sqrt{5}}{2}$ would make
this set a Banach algebra. Moreover, we will see that this choice allows a complete control on the approximation errors and enables us to perform, in a safe way, different computational operations like compression or matrix inversion.

The paper is organized as follows. In Section~\ref{sec:qt} we recall
the definition and some theoretical results about QT-matrices,
together with the norm $\|\cdot\|\qt$. We introduce the class of
finitely representable QT matrices and provide a first description of
the {\tt cqt-toolbox}.

Section \ref{sec:arith} deals with the definition and the analysis of the
arithmetic operations in the algebra of finitely representable QT
matrices. The first subsections deal with addition, multiplication, inversion and
compression. 
Then, Section \ref{sec:finite} describes the
extension of the arithmetic (and of the toolbox) to the case of finite QT matrices.
Section~\ref{sec:app} provides some examples of applications,
Section \ref{sec:conclusion} draws the conclusions.

In the appendices,  we provide some
details on the Sieveking-Kung algorithm for triangular Toeplitz matrix
inversion \ref{sec:sk}, and on the main algorithms for computing the Wiener-Hopf
factorization \ref{sec:wh}.

\section{The class of QT matrices}\label{sec:qt}

We start by introducing the set of semi-infinite matrices that we are going to implement,
 recall its main properties and provide an effective (approximate)
\emph{finite} representation.

\subsection{The Wiener class and semi-infinite Toeplitz matrices}
We indicate with $\mathbb T := \{
z\in\mathbb C:\ |z|=1\}$ the complex unit circle, and with $\mathcal W$ the Wiener class
formed by the functions $a(z)=\sum_{i=-\infty}^{+\infty}a_iz^i:\mathbb
T\to\mathbb C$ such that $\sum_{i=-\infty}^{+\infty}|a_i|<+\infty$,
that is functions expressed by a Laurent series with absolutely
summable coefficients.

The set $\mathcal W$, endowed with the norm
$\|a\|\w:=\sum_{i\in\mathbb Z}|a_i|$, is a Banach algebra.  By the
Wiener theorem, \cite[Section 1.4]{bottcher2005spectral}, a Laurent
series in $\mathcal W$ is invertible if and only if $a(z) \neq 0$ on
the unit circle. Under the latter condition, there exist functions
$u(z)=\sum_{i=0}^\infty u_iz^i, l(z)=\sum_{i=0}^\infty
l_iz^i\in\mathcal W$ with $u(z),l(z)\ne 0$ for $|z|\le 1$ such that
the factorization
\[
a(z)=u(z)z^ml(z^{-1}), \qquad 
u(z) = \sum_{i=0}^\infty u_iz^i, \qquad 
l(z) = \sum_{i = 0}^\infty l_i z^i
\]
holds where $m$ is the winding number of $a(z)$.  The above
decomposition is known as \emph{Wiener-Hopf factorization}.  We refer the
reader to the first chapter of the book \cite{bottcher2005spectral}
for more details.

We associate an element $a(z)=\sum_{i\in\mathbb Z}a_iz^i$ of the 
Wiener class 
with the semi-infinite Toeplitz
matrix $T(a)=(t_{i,j})$ such that $t_{i,j}=a_{j-i}$ for
$i,j\in\mathbb Z^+$, or, in matrix form
\[
T(a)=\begin{bmatrix}
a_0&a_1&a_2&\dots\\
a_{-1}&a_0&a_1 &\ddots\\
a_{-2}& a_{-1}&a_0&\ddots\\
\vdots&\ddots&\ddots&\ddots
\end{bmatrix}.
\]

Matrices $T(a)$ naturally define operators from $\ell^2$ into itself,
where $\ell^2$ is the set of sequences $x=(x_i)_{i\ge 1}$ such that
$\|x\|_2=( \sum_{i=1}^\infty |x_i|^2)^{\frac 12}$ is finite. 
In particular, one can show that $\norm{T(a)}_2 \leq \norm{a}\w$, 
where $\norm{T(a)}_2$ denotes the operator norm induced by the
$\ell^2$-norm on the operator $T(a)$. 

Given $a(z)\in\mathcal W$ we denote $a^+(z)=\sum_{i=1}^\infty a_iz^i$,
$a^-(z)=\sum_{i=1}^\infty a_{-i}z^i$, so that we may write
$a(z)=a^-(z^{-1})+a_0+a^+(z)$. Moreover, given the power series
$b(z)=\sum_{i=0}^\infty b_iz^i\in\mathcal W$, we denote
$H(b)=(h_{i,j})$ the semi-infinite Hankel matrix defined by
$h_{i,j}=b_{i+j-1}$, for $i,j\in\mathbb Z^+$.

Despite $\mathcal W$ is closed under multiplication, the corresponding
matrix class formed by semi-infinite Toeplitz matrices of the kind
$T(a)$, for $a\in\mathcal W$, is not. However, it satisfies this
property up to a compact correction \cite{bottcher2005spectral} as
stated by the following result.

\begin{theorem}\label{thm1}
	Let $a(z),b(z)\in\mathcal W$ and set $c(z)=a(z)b(z)$. Then
	\[
	T(a)T(b)=T(c)-H(a^-)H(b^+).
	\]
	where $H(a^-)=(h^-_{i,j})_{i,j\ge 1}$, $H(b^+)=(h^+_{i,j})_{i,j\ge
		1}$ with $h^-_{i,j}=a_{-(i+j+1)}$ and $h^+_{i,j}=b_{i+j+1}$.
	Moreover, the matrices $H(a^-)$ and $H(b^+)$ define
		compact operators on $\ell^2$ and are such that
	$\|H(a^-)\|_2\le\|a^-\|\w$ and $\|H(b^+)\|_2\le\|b^+\|\w$.
\end{theorem}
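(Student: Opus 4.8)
The plan is to prove the matrix identity entrywise and then treat the norm bounds and compactness as a separate, self-contained step; a slicker operator-theoretic derivation of the identity can be sketched alongside. First note that the product $T(a)T(b)$ makes sense as a bounded operator, since the paper has already recorded $\|T(a)\|_2\le\|a\|\w$ and likewise for $b$. Its $(i,j)$ entry is the double sum $\sum_{k\ge 1}a_{k-i}\,b_{j-k}$, which converges absolutely because $\sum_{k}|a_{k-i}|\,|b_{j-k}|\le\|a\|\w\|b\|\w<\infty$; this absolute convergence is exactly what licenses computing the product entrywise as $(T(a)T(b))_{i,j}=\sum_{k\ge1}(T(a))_{i,k}(T(b))_{k,j}$.

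Next I would compare with $T(c)$. Since $c=ab$, its coefficients are the convolution $c_n=\sum_{p\in\mathbb Z}a_p\,b_{n-p}$, and the substitution $p=k-i$ rewrites $(T(c))_{i,j}=c_{j-i}=\sum_{k\in\mathbb Z}a_{k-i}\,b_{j-k}$ --- the same summand as before, but now with $k$ running over all of $\mathbb Z$. Subtracting term by term (again justified by absolute convergence) isolates precisely the missing tail, $(T(c))_{i,j}-(T(a)T(b))_{i,j}=\sum_{k\le 0}a_{k-i}\,b_{j-k}$. Reindexing this tail by $l=1-k\ge 1$ turns it into $\sum_{l\ge 1}a_{-(i+l-1)}\,b_{j+l-1}$, and with the Hankel convention $H(\cdot)_{p,q}=(\cdot)_{p+q-1}$ of the previous subsection this is exactly $\sum_{l\ge1}H(a^-)_{i,l}\,H(b^+)_{l,j}=(H(a^-)H(b^+))_{i,j}$. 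Matching entries over all $(i,j)$ gives $T(a)T(b)=T(c)-H(a^-)H(b^+)$. The same identity drops out conceptually by embedding everything into the bi-infinite Laurent operator $L$: one has $L(a)L(b)=L(c)$ exactly, and splitting the identity on $\ell^2(\mathbb Z)$ as $P+Q$ (with $P$ the projection onto the positive indices) yields $T(a)T(b)=PL(a)L(b)P-PL(a)\,Q\,L(b)P$, whose correction term $PL(a)QL(b)P$ is the Hankel product above.

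Finally I would establish the analytic claims through a single observation: any Hankel matrix $H(b)=(b_{i+j-1})$ decomposes as $H(b)=\sum_{n\ge1}b_n E_n$, where $E_n$ is the $0/1$ matrix supported on the finite anti-diagonal $\{i+j-1=n\}$. Each $E_n$ sends distinct standard basis vectors to distinct standard basis vectors, so it is a finite-rank partial isometry with $\|E_n\|_2=1$. The triangle inequality then gives $\|H(b)\|_2\le\sum_{n\ge1}|b_n|=\|b\|\w$, which specializes to $\|H(a^-)\|_2\le\|a^-\|\w$ and $\|H(b^+)\|_2\le\|b^+\|\w$. Compactness comes for free: the partial sums $\sum_{n\le N}b_nE_n$ are finite rank and converge to $H(b)$ in operator norm, since the tail is bounded by $\sum_{n>N}|b_n|\to 0$, so $H(b)$ is a norm limit of finite-rank operators.

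The computation is mostly routine, and I expect the only genuinely error-prone step to be the index bookkeeping in the reindexing $l=1-k$: one must keep the two shifts consistent so that the tail matches the chosen Hankel convention and the resulting entries of $H(a^-)$ and $H(b^+)$ come out with the correct subscripts. Everything downstream --- absolute convergence, the triangle-inequality norm bound, and the finite-rank approximation --- is a direct estimate.
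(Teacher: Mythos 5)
Your proof is correct and complete. Note that the paper does not actually prove this theorem --- it imports it from \cite{bottcher2005spectral} --- so there is no in-paper argument to compare against; your entrywise computation (equivalently, the $P$/$Q$ splitting of the bi-infinite Laurent operator, which is the textbook route) together with the anti-diagonal decomposition $H(b)=\sum_{n\ge 1}b_nE_n$ is exactly the standard derivation, and the finite-rank partial-isometry observation cleanly delivers both $\|H(b)\|_2\le\|b\|\w$ and compactness in one stroke. One point you should make explicit rather than leave as a worry: your index bookkeeping comes out as $H(a^-)_{i,l}=a_{-(i+l-1)}$ and $H(b^+)_{l,j}=b_{l+j-1}$, which agrees with the Hankel convention $h_{i,j}=b_{i+j-1}$ fixed earlier in Section~\ref{sec:qt}, but \emph{not} with the subscripts $a_{-(i+j+1)}$ and $b_{i+j+1}$ printed in the theorem statement itself. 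A quick sanity check with $a(z)=a_{-1}z^{-1}$ and $b(z)=b_1z$ --- where $(T(a)T(b))_{1,1}=0$ while $T(c)_{1,1}=a_{-1}b_1$, so the correction at $(1,1)$ must equal $a_{-1}b_1$ --- confirms that your indices are the right ones and that the statement as printed contains a typo ($i+j+1$ should read $i+j-1$). So the one step you flagged as error-prone is in fact where you silently correct the source.
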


Assume that $a(z)=\sum_{-n_-}^{n_+} a_iz^i$ where $0 \leq n_-,n_+ <
\infty$.  We recall that (see \cite{bottcher2012introduction},
\cite{Gohberg1952}) for a continuous symbol $a(z)$ the matrix $T(a)$
is invertible if and only if $a(z)\ne 0$ for $|z|=1$ and the winding
number of $a(z)$ is 0. On the other hand, from \cite[Theorem
1.14]{Gohberg1952} the latter condition implies that there exist
polynomials $u(z)=\sum_{i=0}^{n_+} u_iz^i$ and
$l(z)=\sum_{i=0}^{n_-}l_iz^{i}$ having zeros of modulus less than 1
such that $a(z)=u(z)l(z^{-1})$. Therefore we may conclude that if
$T(a)$ is invertible then there exists the \emph{Wiener-Hopf}
factorization $a(z)=u(z)l(z^{-1})$ so that $T(a)=T(u)T(l)^T$ and we
may write
\begin{equation}\label{eq:tinv}
T(a)^{-1}=(T(l)^T)^{-1}T(u)^{-1}.
\end{equation}
Observe that since $u(z)$ and $l(z)$ have zeros of modulus less than
1,  by Wiener's theorem, are invertible as power series. 
These arguments, together with Theorem \ref{thm1}, lead to the
following result \cite{bottcher2012introduction}.

\begin{theorem}\label{thm:inv}
	If $a(z)=\sum_{-n_-}^{n_+} a_iz^i$, then $T(a)$ is invertible
	in $\ell^2$ if and only if there exists the Wiener-Hopf
	factorization $a(z)=u(z)l(z^{-1})$, for $u(z)=\sum_{i=0}^{n_+}
	u_iz^i$ and $l(z)=\sum_{i=0}^{n_-}l_iz^{i}$ having zeros of
	modulus less than 1. Moreover $u^{-1}(z),l^{-1}(z)\in\mathcal
	W$ so that
	\begin{equation}\label{eq:tinvH}
	\begin{split}
	&a^{-1}(z)=l(z^{-1})^{-1}u(z)^{-1},\\
	&T(a)^{-1}=T(l^{-1})^TT(u^{-1})=T(a^{-1})+E,\quad E=H(l^{-1})H(u^{-1}),\\
	&\|E\|_2\le\|l^{-1}\|\w\|u^{-1}\|\w.
	\end{split}
	\end{equation}
\end{theorem}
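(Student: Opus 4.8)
The plan is to separate the logical equivalence from the explicit formulas, obtaining everything by repeated use of Theorem~\ref{thm1} applied to the polynomial factors $u$ and $l$. For the equivalence I would first recall, as in the discussion preceding the statement, that a Toeplitz operator with continuous (here Laurent-polynomial) symbol is invertible on $\ell^2$ precisely when $a(z)\neq 0$ on $\mathbb T$ and $a$ has winding number zero; \cite[Theorem 1.14]{Gohberg1952} then converts these two conditions into the existence of the claimed factorization $a(z)=u(z)l(z^{-1})$ with the prescribed root locations. For the reverse implication I would observe that once such a factorization is available, $T(a)$ factors into two invertible triangular Toeplitz operators and hence is itself invertible, which closes the characterization.

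First I would upgrade the scalar factorization to the matrix identity $T(a)=T(u)T(l)^{T}$. Writing $T(l)^{T}=T(l(z^{-1}))$ and noting that $u$ has no strictly-negative part, Theorem~\ref{thm1} applied to $T(u)T(l(z^{-1}))$ produces the Toeplitz term $T(u\cdot l(z^{-1}))=T(a)$ with a vanishing Hankel correction, so the identity is exact. Next, Wiener's theorem guarantees that $u^{-1}$ and $l^{-1}$ lie in $\mathcal W$ and are genuine power series, so $T(u^{-1})$ and $T(l^{-1})$ are triangular Toeplitz operators. Applying Theorem~\ref{thm1} to the products $T(u)T(u^{-1})$ and $T(u^{-1})T(u)$---in both of which the Hankel correction vanishes because the left-hand factor is a power series with no negative part---yields $T(u)^{-1}=T(u^{-1})$, and by transposition $(T(l)^{T})^{-1}=T(l^{-1})^{T}$. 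Combining these gives $T(a)^{-1}=T(l^{-1})^{T}T(u^{-1})$, while $a^{-1}(z)=l(z^{-1})^{-1}u(z)^{-1}$ is immediate from $a=u\cdot l(z^{-1})$.

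It then remains to separate the Toeplitz and compact parts of $T(a)^{-1}$. For this I would apply Theorem~\ref{thm1} one last time to the product $T(l^{-1})^{T}T(u^{-1})$: the two symbols multiply to $a^{-1}(z)$, so the Toeplitz term is $T(a^{-1})$ and the remainder is the product of the two Hankel matrices built from the power series $l^{-1}$ and $u^{-1}$, that is $E=H(l^{-1})H(u^{-1})$ up to sign. Finally, the norm bound follows from submultiplicativity of the operator norm together with the Hankel bounds $\|H(l^{-1})\|_2\le\|l^{-1}\|\w$ and $\|H(u^{-1})\|_2\le\|u^{-1}\|\w$, which are instances of the estimate in Theorem~\ref{thm1}, giving $\|E\|_2\le\|H(l^{-1})\|_2\,\|H(u^{-1})\|_2\le\|l^{-1}\|\w\,\|u^{-1}\|\w$.

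The hard part will be bookkeeping rather than any deep idea. One must check that the several Hankel corrections coming from Theorem~\ref{thm1} really fall on the intended blocks, with the correct indexing and sign, and in particular that the leftover term in the last step coincides with $H(l^{-1})H(u^{-1})$. Equally delicate is the analytic input feeding Wiener's theorem: one must ensure that the reciprocals $u^{-1},l^{-1}$ are honest power series in $\mathcal W$ and not merely Laurent series, since this is exactly what keeps $T(u^{-1})$ and $T(l^{-1})$ triangular and forces the spurious Hankel terms to vanish in the intermediate computations.
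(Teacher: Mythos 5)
Your proposal is correct and follows essentially the same route the paper takes in the discussion preceding the theorem: the classical invertibility criterion for continuous symbols plus \cite[Theorem 1.14]{Gohberg1952} for the factorization, Wiener's theorem for $u^{-1},l^{-1}\in\mathcal W$, the exact identity $T(a)=T(u)T(l)^T$ and $T(u)^{-1}=T(u^{-1})$ from the vanishing Hankel corrections, and a final application of Theorem~\ref{thm1} to $T(l^{-1})^TT(u^{-1})$ for the compact term and the bound $\|E\|_2\le\|l^{-1}\|\w\|u^{-1}\|\w$. Your caveat ``up to sign'' on $E$ is well taken, since Theorem~\ref{thm1} produces $-H(l^{-1})H(u^{-1})$, consistent with \eqref{eq:inv1} but not with the sign displayed in \eqref{eq:tinvH}.
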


\subsection{Quasi-Toeplitz matrices}
We are ready to introduce the central notion of this paper.

\begin{definition}\label{def:qt}
	We say that the semi-infinite matrix $A$ is  {\em Quasi-Toeplitz
		(QT)} if it can be written in the form
	\[
	A=T(a)+E,
	\]
	where 
	$a(z)=\sum_{i=-\infty}^{+\infty} a_iz^i$ 
	is in the Wiener class,
	and $E=(e_{i,j})$
	defines a compact operator on $\ell^2$. 
\end{definition}

It is well known \cite{bottcher2012introduction} that  the
class of $QT$ matrices, equipped with the $\ell^2$ norm, is a Banach
algebra. However, the $\ell^2$ norm can be difficult
to compute numerically, so we prefer to
introduce a slightly different norm which still preserves the Banach
algebra property. Let $\alpha=(1+\sqrt 5)/2$ and set
$\|A\|\qt=\alpha\|a\|\w+\|E\|_2$. Clearly, $\|A\|\qt$ is a norm which
makes complete the linear space of QT-matrices. Moreover, it is easy
to verify that this norm is sub-multiplicative, that is,
$\|AB\|\qt\le\|A\|\qt\|B\|\qt$ for any pair of QT matrices $A,B$. This
way, the linear space of QT matrices endowed with the norm
$\|\cdot\|\qt$ forms a Banach algebra that we denote by
$\mathcal{QT}$. Observe also that $\|A\|_2\le \|A\|\qt$ for any QT
matrix $A$.

The next lemma ensures that every QT matrix admits finitely representable
approximations with arbitrary accuracy.

\begin{lemma} \label{lem:truncation-wiener}
	Let $A=T(a)+E\in\mathcal{QT}$ and $\epsilon > 0$. Then, there
	exist non negative integers $n_-,n_+,n_r,n_c$ such that the matrix
	$\widehat A=T(\widehat a)+\widehat E$, defined by
	\[\hat a(z) = \sum_{i = -n_-}^{n_+} a_i z^i,\qquad
	\widehat E_{ij} = \begin{cases}
	E_{ij} & \text{if} \quad 1 \leq i\leq n_r\quad\text{and}\quad 1\leq j \leq n_c\\
	0 & \text{otherwise}
	\end{cases},
	\] 
	verifies
	$\norm{A-\widehat A}_{\mathcal{QT}} \leq \norm{A}_{\mathcal{QT}} \cdot \epsilon$. 
\end{lemma}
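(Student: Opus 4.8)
The plan is to bound the QT-norm of the error $A - \widehat A$ by splitting it into its Toeplitz part and its compact correction part, then controlling each separately using convergence of absolutely summable series. I would write
\[
A - \widehat A = T(a - \widehat a) + (E - \widehat E),
\]
and invoke the definition of the norm to get
\[
\norm{A - \widehat A}_{\mathcal{QT}} = \alpha \norm{a - \widehat a}_{_{\mathcal W}} + \norm{E - \widehat E}_2.
\]
So it suffices to make each of the two summands smaller than, say, $\tfrac{1}{2}\norm{A}_{\mathcal{QT}}\epsilon$ by choosing the truncation parameters large enough.

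For the first term, since $a \in \mathcal W$ the series $\sum_{i \in \mathbb Z} |a_i|$ converges, so its tails vanish: $\norm{a - \widehat a}_{_{\mathcal W}} = \sum_{i < -n_-} |a_i| + \sum_{i > n_+} |a_i| \to 0$ as $n_-, n_+ \to \infty$. Hence for any prescribed $\delta > 0$ there exist $n_-, n_+$ with $\norm{a - \widehat a}_{_{\mathcal W}} \leq \delta$. I would simply pick $\delta = \tfrac{1}{2\alpha}\norm{A}_{\mathcal{QT}}\epsilon$ to handle the Toeplitz contribution. Note this uses only that the coefficient sequence is absolutely summable, which is built into membership in $\mathcal W$.

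The second term is the main obstacle, because $\widehat E$ is obtained by zeroing out all entries of $E$ outside a finite leading $n_r \times n_c$ block, and there is no direct reason that such a truncation converges in operator norm for a merely bounded $E$. The key fact I would exploit is that $E$ is \emph{compact}: a compact operator on $\ell^2$ is the operator-norm limit of finite-rank operators, and in particular the finite leading-principal-block truncations $P_{n_r} E P_{n_c}$ (where $P_k$ is the orthogonal projection onto the first $k$ coordinates) converge to $E$ in the $\ell^2$ operator norm. This is a standard consequence of compactness: the projections $P_k$ converge strongly to the identity, and strong convergence upgrades to uniform convergence when composed with a compact operator. Thus $\norm{E - \widehat E}_2 = \norm{E - P_{n_r} E P_{n_c}}_2 \to 0$ as $n_r, n_c \to \infty$, and I can choose $n_r, n_c$ so that this is at most $\tfrac{1}{2}\norm{A}_{\mathcal{QT}}\epsilon$.

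Combining the two estimates gives $\norm{A - \widehat A}_{\mathcal{QT}} \leq \tfrac{1}{2}\norm{A}_{\mathcal{QT}}\epsilon + \tfrac{1}{2}\norm{A}_{\mathcal{QT}}\epsilon = \norm{A}_{\mathcal{QT}}\epsilon$, as required. The only genuinely nontrivial ingredient is the uniform approximation of a compact operator by its finite sections; I would either cite the standard result that $P_k \to I$ strongly together with compactness forces $\norm{E - P_{n_r} E P_{n_c}}_2 \to 0$, or give the short argument that $\norm{E - P_{n_r} E}_2 \to 0$ by precompactness of the image of the unit ball (and symmetrically on the right). Everything else is the triangle inequality and tail estimates for convergent series.
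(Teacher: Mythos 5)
Your proof is correct, and it takes a route that is genuinely different from (and in one respect cleaner than) the paper's. The paper also splits the error into the Toeplitz and compact parts and disposes of the symbol by the same tail estimate, but for the correction it does not work with the leading-block truncation directly: it first approximates the compact $E$ by a finite-rank operator $\sum_{i=1}^k \sigma_i u_i v_i^T$ to within $\frac{\epsilon}{2}\norm{E}_2$, then truncates each vector $u_i, v_i$ to finite support and takes $\widehat E = \sum_i \sigma_i \widehat u_i \widehat v_i^T$, summing the two errors. That construction yields a finite-support matrix, but strictly speaking not the matrix $\widehat E$ of the lemma statement (the leading $n_r\times n_c$ block of $E$ itself). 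Your argument --- that $P_{n_r} E P_{n_c} \to E$ in operator norm because $P_k \to I$ strongly and composition with a compact operator upgrades strong to uniform convergence (applied on the left to $E$ and on the right via $E^*$) --- proves the statement exactly as written, at the cost of invoking this standard piece of operator theory rather than the more hands-on finite-rank decomposition. Both hinge on compactness in an essential way; what the paper's version buys is a constructive flavour that foreshadows the low-rank $UV^T$ storage format actually used in the toolbox, while yours buys directness and fidelity to the claim. Your bookkeeping of the thresholds (splitting $\epsilon\norm{A}_{\mathcal{QT}}$ evenly between the two parts) is fine and in fact slightly tidier than the paper's, which allots $\epsilon\norm{A}_{\mathcal{QT}}/\alpha$ to the symbol and $\epsilon\norm{A}_{\mathcal{QT}}$ to the correction and relies on the final display to absorb the constants.
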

\begin{proof}
	Since $A\in\mathcal{QT}$ then $\norm{a}\w=\sum_{j\in\mathbb
		Z}|a_j|<\infty$. This means that there exist $n_-,n_+$ such
	that
	\begin{equation}
	\label{ineq:tmp}	
	\|a-\widehat a\|\w=\sum_{j<-n_-}|a_j|+\sum_{j>-n_+}|a_j|\leq \epsilon\|A\|\qt /\alpha. 
	\end{equation}
	Since $E$ represents a compact operator, there exist
	$k\in\mathbb N$, $\sigma_i\in\mathbb R^+$ and
	$u_i,v_i\in\mathbb R^{\mathbb N}$ with unit $2$-norm,
	$i=1,\dots,k$, which verify
	$\norm{E-\sum_{i=1}^k\sigma_iu_iv_i^T}_2\leq
	\frac{\epsilon}2\norm{E}_2$. The condition
	$\norm{u_i}_2=\norm{v_i}_2=1$ implies that there exist two
	integers $n_r$ and $n_c$ such that the vectors
	\[
	\widetilde u_i(j)=\begin{cases}
	u_i(j)&\text{if }j> n_r\\
	0&\text{otherwise}
	\end{cases},\qquad
	\widetilde v_i(j)=\begin{cases}
	v_i(j)&\text{if }j> n_c\\
	0&\text{otherwise}
	\end{cases},
	\]
	have $2$-norms bounded by
	$\frac{\epsilon\norm{A}\qt}{4k\max_i\sigma_i}$. Then, denoting
	by $\widehat u_i:=u_i-\tilde u_i$ and $\widehat
	v_i:=v_i-\tilde v_i$, and setting $\widehat E:=
	\sum_{i=1}^k\sigma_i\widehat u_i\widehat v_i^T$, we find that
	\[
	\norm{\widehat u_i\widehat v_i^T-u_iv_i^T}_2=\norm{\tilde u_i
		v_i^T+ \widehat u_i \tilde v_i^T}_2\leq
	\frac{\epsilon\norm{A}_\qt}{2k\max_i\sigma_i}\quad
	\Longrightarrow\quad \norm{\widehat
		E-\sum_{i=1}^k\sigma_iu_iv_i^T}_2\leq
	\frac{\epsilon}{2}\norm{A}_\qt.
	\]
	To conclude, we have $\norm{E-\widehat E}_2\leq
	\norm{E-\sum_{i=1}^k\sigma_iu_iv_i^T}_2 +
	\norm{\sum_{i=1}^k\sigma_iu_iv_i^T-\widehat E}_2\leq
	\epsilon\norm{A}_\qt$. Thus, from the latter inequality and
	from \eqref{ineq:tmp} we get$\|A-\widehat
	A\|\qt=\alpha\|a-\hat a\|\w+\|E-\widehat E\|_2\le
	\epsilon\|A\|\qt$.
\end{proof}

This result makes it possible to draw an analogy between the
representation of semi-infinite quasi Toeplitz matrices and floating
point numbers. When representing a real number $a$ in floating point
format fl$(a)$ it is guaranteed that
\[
\fl(a ) = a + \mathcal E, \qquad 
|\mathcal E| \leq |a| \cdot \epsilon, 
\]
where $\epsilon$ is the 
so-called unit roundoff. 

We design a similar framework for QT-matrices. More precisely,
analogously to the operator ``fl$(\cdot)$'', we introduce a
``truncation'' operator $\mathcal{QT}(\cdot)$\ that works separately
on the Toeplitz and on the compact correction, as described by
Lemma~\ref{lem:truncation-wiener}.  So, for a QT-matrix $A=T(a) +
E_a$, we have
\begin{equation} \label{eq:floating-point-qt}
\mathcal{QT}(A) = T(\widehat a)+ \widehat E_a =A  + \mathcal E, \qquad 
\norm{\mathcal E} \leq \norm{A }_{\mathcal QT} \cdot \epsilon, 
\end{equation}
where $\epsilon$ is some prescribed tolerance set a priori
(analogously to the unit roundoff), and $\mathcal{QT}(A)$ is given by
the sum of a banded Toeplitz matrix $T(\widehat a)$ and a semi
infinite matrix $\widehat E_a$, with finite support.

Matrices of the kind $\mathcal{QT}(A)$ form the class of {\em
	finitely} representable Quasi Toeplitz matrices, where, unlike the case
of floating point numbers, the lengths of the representations are not
constant and may vary  in order to guarantee a
uniform bound to the relative error in norm.

The \toolbox\
collects tools for operating with finitely representable Quasi
Toeplitz matrices. The Toeplitz part is stored into two vectors
containing the coefficients of the symbol with non positive and with
non negative indices, respectively. The compact correction is
represented in terms of two matrices $\widehat U_a\in\mathbb
R^{n_r\times k}$ and $\widehat V_a\in\mathbb R^{n_c\times k}$ such
that $\widehat E_a(1:n_r,1:n_c)=\widehat U_a\widehat V_a^T$ coincides
with the upper left corner of the correction.

In order to define a new finitely representable $\mathcal{QT}$ matrix, one
has to call the $\texttt{cqt}$ constructor, in the following way:

\begin{lstlisting}
>> A = cqt(neg, pos, E);
\end{lstlisting}

In the above command, the vectors \lstinline-pos- and \lstinline-neg-
contain the coefficients of the symbol $a(z)$ with non positive and
non negative indices, respectively, and \lstinline-E- is a finite section
of the correction representing its non zero part. For example, to define a matrix $A = T(a) + E$ as
follows
\[
A =
\begin{bmatrix}
1 & 2 & 1 \\
-3 & 4 & 1 & 1 \\
& -1 & 2 & 1 & 1 \\
& & \ddots & \ddots & \ddots & \ddots \\
\end{bmatrix} = T(a) + E, \qquad
\begin{cases}    
a(z) = - z^{-1} + 2 + z + z^2, \\    
E = \begin{bmatrix}
-1&1\\
-2&2
\end{bmatrix}    
\end{cases}
\]
one needs to type the following \matlab\ commands:
\begin{lstlisting}
>> E = [-1, 1;-2, 2];
>> pos = [2 1 1];
>> neg = [2 -1];
>> A = cqt(neg, pos, E);
\end{lstlisting}
Notice that the constant coefficient is contains in both vectors, \lstinline-pos-
and \lstinline-neg-. If the user supplies two different values, the toolbox
returns an error. 
It is also possible to specify the correction in the factorized form $E=UV^T$.
\begin{lstlisting}
>> U = [1; 2];
>> V = [-1; 1];
>> A = cqt(neg, pos, U, V);
\end{lstlisting}
Removing the \lstinline-;- from the last expression will cause \matlab\
to print a brief description of the infinite matrix.
\begin{lstlisting}
>> A

A = 

CQT Matrix of size Inf x Inf

Rank of top-left correction: 1

- Toeplitz part (leading 4 x 5 block): 
 2     1     1     0     0
-1     2     1     1     0
 0    -1     2     1     1
 0     0    -1     2     1

- Finite correction (top-left corner): 
-1     1
-2     2
\end{lstlisting}

The different parts composing a $\mathcal{QT}$ matrix $A$ can be fetched
independently using the instructions \lstinline-symbol- and \lstinline-correction-.
For the previous example we have:

\begin{lstlisting}
>> [neg, pos] = symbol(A)

neg =

2    -1

pos =

2     1     1

>> E = correction(A)

E =

-1     1
-2     2
\end{lstlisting}
The command \lstinline|[U, V] = correction(A)| allows to retrieve the
correction in factorized form. The rank of the latter can be obtained with 
the command \lstinline-cqtrank-. 

\section{Arithmetic operations}\label{sec:arith}
When performing floating point operations
it is guaranteed
that 
\[
\fl(a \odot b) = a \odot b + \mathcal E, \qquad 
|\mathcal E| \leq (a \odot b) \cdot \epsilon, 
\]
where $\odot$ is any basilar arithmetic operation (sum, subtraction,
multiplication, and division). 

Extending the analogy, the matrix arithmetic in the set of finitely representable QT matrices
is implemented in  a way that the outcome of $A \odot B$,
for any pair of finitely representable $A,B\in\mathcal{QT}$ and
$\odot \in \{$\lstinline:+,-,*,/,\:$\}$, is represented by
$\mathcal{QT}(A\odot B)$ such that
\begin{equation} \label{eq:floating-point-arith}
A\odot B =\mathcal{QT}(A\odot B)+\mathcal E,\quad
\|\mathcal E\|\qt\le\epsilon\|A\odot B\|\qt.
\end{equation}

Notice that the outcome of an
arithmetic operation between finitely representable QT matrices, might not
be finitely representable, that is why we need to apply the $\mathcal{QT}(\cdot)$
operator on it. 

Another benefit of the $\mathcal{QT}(\cdot)$ operator is that it
optimizes the memory usage, since it minimizes the number of
parameters required to store the data up to the required accuracy.
The practical implementation of $\mathcal{QT}(\cdot)$ is reported in
Section~\ref{sec:compress}.

We now describe how the arithmetic operations are performed in the
\toolbox. These overloaded operators correspond to the built-in
functions of \matlab, i.e., they can be invoked with the usual operators
\lstinline:+,-,*,/,\:. Since we represent only the non zero sections of infinite
objects we rely on operations between matrices and vectors that might
be of non compatible sizes, e.g., sum of vectors with different
lengths. This has to be interpreted as filling the missing entries with
zeros.

\subsection{Addition}
Given two finitely representable QT matrices $A = T(a) + E_a$ and $B =
T(b) + E_b$, the matrix $C=A+B$ is defined by the symbol
$c(z)=a(z)+b(z)$ and by the correction $E_c=E_a+E_b$. Hence, the symbol
$c(z)$ is computed with two sums of vectors. The factorization
$E_c=U_cV_c^T$ is given by

\begin{equation}\label{eq:add}
U_c=[U_a,U_b], \quad V_c=[V_a,V_b].
\end{equation}

Then, applying the  compression technique, where $U_c$ and $V_c$ are replaced by matrices $\widehat U_c$ and $\widehat V_c$, respectively, having a lower number of columns and such that $\|E_c-\widehat U_c\widehat V_c^T\|_2$ is sufficiently small, we get

\[
\mathcal{QT}(A+B)=A+B+\mathcal E, 
\quad \|\mathcal E\|\qt\le\epsilon\|A+B\|\qt.
\]
The compression technique will be described in Section \ref{sec:compress}.

We refer to $\mathcal E$ as the local error of the
addition. Observe that if the operands $A$ and $B$ are affected
themselves by an error ${\mathcal E}_A$ and ${\mathcal E}_B$,
respectively, that is, the original QT-matrices $A$ and $B$ are
represented by approximations $\widehat A$ and $\widehat B$,
respectively such that
\begin{equation}\label{eq:errin}
\widehat A = A+{\mathcal E}_A, \qquad 
\widehat B = B+{\mathcal E}_B,
\end{equation}
then the computed matrix $\mathcal{QT}(\widehat A+\widehat B)$ differs
from $A+B$ by the total error given by
\begin{equation}\label{eq:err_add}
{\mathcal QT}(\widehat A+\widehat B)-(A+B)={\mathcal E}_A+{\mathcal
	E}_ B +{\mathcal E},
\end{equation}
where ${\mathcal E}_A+{\mathcal E}_ B$ is the inherent error caused by
the approximated input, while $\mathcal E$ is the local error.
Equation \eqref{eq:err_add} says that the global error is the sum of
the local error and the inherent error, and can be used to perform
error analysis in the QT-matrix arithmetic.
\subsection{Multiplication}
In view of Theorem \ref{thm1} we may write 
\[
C=AB = T(c)-H(a^-)H(b^+)+T(a)E_b+E_aT(b)+E_aE_b = T(c)+E_c,
\]
where $c(z)=a(z)b(z)$ and
\[
E_c=T(a)E_b+E_aT(b)+E_aE_b-H(a^-)H(b^+).
\]
The symbol $c(z)$ is obtained by computing the convolution of the
vectors representing the symbols $a(z)$ and $b(z)$, respectively.

For the correction part, denoting by $E_a=U_aV_a^T$, $E_b=U_bV_b^T$, $H(a^-)=M_a N_a^T$,
$H(b^+)=M_b N_b^T$ the decompositions of the matrices involved, we may write $E_c=U_cV_c^T$ with
\[
U_c=\left[ T(a)U_b,~U_a,-M_a\right],\qquad
V_c=\left[V_b,~T(b)^TV_a+V_b(U_b^TV_a),  N_b(M_b^TN_a) \right].
\]
Notice that, the products $T(a)U_b$ and $T(b)^TV_a$ generate matrices
with infinite rows and finite support. The computation of the non zero
parts of the latter require only a finite section of $T(a)$ and
$T(b)^T$, respectively. These operations are carried out efficiently
relying on the \emph{fast Fourier transform} (FFT).

The compressed outcome ${\mathcal QT}(AB)$  satisfies the equation
\[
\mathcal{QT}(AB)=AB+\mathcal{E},
\quad \|\mathcal E\|\qt\le\epsilon\|AB\|\qt,
\]
where $\mathcal E$ is the local error of the operation. If the operands $A$ and $B$ are affected by errors $\mathcal{E}_A$
and $\mathcal{E}_B$, respectively, such that \eqref{eq:errin} holds,
then  the global error in the computed product is given by
\begin{equation}\label{eq:err_mul}
\mathcal{QT}(\widehat A\widehat
B)-AB=\mathcal{E}+A\mathcal{E}_B+B\mathcal{E}_A+\mathcal{E}_A\mathcal{E}_B
\end{equation} 
where $A\mathcal{E}_B+B\mathcal{E}_A+\mathcal{E}_A\mathcal{E}_B$ is
the inherent error caused by the approximated input, while $\mathcal
E$ is the local error of the approximated multiplication.  In a first
order analysis we may replace the inherent error with
$A\mathcal{E}_B+B\mathcal{E}_A$ neglecting the quadratic part
$\mathcal{E}_A\mathcal{E}_B$.

\subsection{Inversion}\label{sec:inv}
Let $A=T(a)+E_a\in\mathcal{QT}$ be a finitely representable QT matrix such that the symbol 
\[
a(z)=\sum_{i=-n_a^-}^{n_a^+}a_iz^i
\]
admits the Wiener-Hopf factorization in the form $a(z)=u(z)l(z^{-1})$,
so that $T(a)=T(u)T(l)^T$ is invertible and $T(a)^{-1}=(T(l)^{-1})^T
T(u)^{-1}$. Assume also that $E_a$ is given in the factored form
$E_a=U_aV_a^T$ where $U_a$ and $V_a$ are matrices formed by $k$
columns and have null entries if the row index is greater than $m_a$.

Thus, we may write $A=T(a)(I+T(a)^{-1}E_a)$ so that, if $I+T(a)^{-1}E_a$ is
invertible then also $A$ is invertible and
\begin{equation}\label{eq:inv}
A^{-1}=(I+T(a)^{-1}E_a)^{-1}T(a)^{-1},\quad 
T(a)^{-1}=(T(l)^{-1})^T T(u)^{-1} .
\end{equation}
Observe also that by Theorem \ref{thm1} we may write $T(u)^{-1}=T(u^{-1})$ and $T(l)^{-1}=T(l^{-1})$.

Equation \eqref{eq:inv} provides a way to compute $A^{-1}$,
represented in the QT form, which consists essentially in computing
the coefficients of $u(z)$, $l(z)$ and of their inverses, and then to
invert a special QT matrix, that is, $I+T(a)^{-1}E_a=:I+E$.

Here we assume that the coefficients of the polynomials $u(z)$,
$l(z)$ and of the power series $u(z)^{-1}$ and $l(z)^{-1}$ are
available. In the appendix, we provide more details on how to perform
their computation.  Once we have computed $u(z)^{-1}$ and
$l(z)^{-1}$, by Theorem \ref{thm1} we may write
\begin{equation}\label{eq:inv1}
T(a)^{-1}=T(b)-H(l^{-1})H(u^{-1}),\quad b(z)=l(z^{-1})^{-1}u(z)^{-1},
\end{equation}
where the coefficients of $b(z)$ are computed by convolution of the
coefficients of $u(z)^{-1}$ and of $l(z^{-1})^{-1}$.

Concerning the inversion of $I+E$, where $E=T(a)^{-1}E_a$, we find
that $E=T(a)^{-1}U_aV_a^T=:UV^T$, for $U=T(a)^{-1}U_a$, $V=V_a$.
Consider the $k\times k$ matrix $S_k=I_k+V^TU$ which has finite
support since $V^TU=V_a^TT(a)^{-1}U_a$ and both $U_a$ and $V_a$ have a
finite number of nonzero rows. If $S_k$ is invertible then it can be
easily verified that $I-US_k^{-1}V^T$ is the inverse of $I+UV^T$, that
is, by the Shermann-Morrison formula, 
\begin{equation}\label{eq:inv2}
(I+UV^T)^{-1}=I-US_k^{-1}V^T, \quad S_k=I_k+V^TU.
\end{equation}

Now, combining \eqref{eq:inv}, \eqref{eq:inv1}, and \eqref{eq:inv2}
we may provide the following representation of the inverse:
\[
B:=A^{-1}=T(b)-H(l^{-1})H(u^{-1})-T(l^{-1})^T T(u^{-1})U_aS_k^{-1}V_a^T T(l^{-1})^T T(u^{-1}).
\]
Thus we may write $B$ in QT form as
\[
B:=T(b)+U_bV_b^T,\quad b(z)=l(z^{-1})^{-1}u(z)^{-1},
\]
where
\[
U_b=\left[H(l^{-1}), T(l^{-1})^T T(u^{-1})U_a S_k^{-1}
\right]\qquad\text{and}\qquad V_b=-\left[H(u^{-1}), T(u^{-1})^T
T(l^{-1}) V_a \right].
\]

In order to analyze the approximation errors in computing $A^{-1}$ as
a finitely representable  QT matrix, we assume that the computed values of
the Wiener-Hopf factors $u(z)$ and $l(z)$ are affected by some error
and that also in the process of computing the inverse of a power
series we introduce again some error.  Therefore, we denote by
$\widehat u(z)$ and $\widehat l(z)$ the computed values obtained in
place of $u(z)$ and $l(z)$, respectively in the Wiener-Hopf
factorization of $a(z)$ and set $e_{ul}(z)=a(z)-\widehat u(z)\widehat
l(z^{-1})$ for the residual error.  Moreover, denote by
$\delta_u(z)=u(z)-\widehat u(z)$, $\delta_l(z)=l(z)-\widehat l(z)$ the
absolute errors so that we may write the residual error as
\[
e_{ul}=l\delta_u+\widehat u\delta_l\doteq \widehat l\delta_u+\widehat u\delta_l.
\]

We indicate with $v(z)$ and $w(z)$ the power series reciprocal of
$\widehat u(z)$ and $\widehat l(z)$, respectively, i.e., such that
$\widehat u(z)v(z)=1$ and $\widehat l(z) w(z)=1$, while we denote with
$\widehat v(z)$ and $\widehat w(z)$ the polynomials obtained by
truncating $v(z)$ and $w(z)$ to a finite degree. Set
$e_{u}(z)=\widehat v(z)\widehat u(z)-1$, $e_l(z)=\widehat w(z)\widehat
l(z)-1$ for the corresponding residual errors.  We approximate
$a(z)^{-1}$ with the Laurent polynomial $\widehat b(z)=\widehat
w(z^{-1})\widehat v(z)$ up to the error $e_{inv}=a(z)\widehat b(z)-1$.
Finally, we write $\doteq$ and $\dotle$ if the equality and the
inequality, respectively, are valid up to quadratic terms in the
errors $e_{ul}(z)$, $e_u(z)$, and $e_l(z)$.  This way, we may
approximate the matrix $B=T(a)^{-1}=T(a^{-1})-H(l^{-1})H(u^{-1})$ with
the matrix $\widehat B=T(\widehat b)-H(\widehat w)H(\widehat v)$.

It is not complicated to relate  $B-\widehat B$ to the errors
$e_{inv}(z)$, $e_{ul}(z)$, $e_u(z)$, $e_l(z)$ as expressed in the
following proposition where, for the sake of notational simplicity, we
omit the variable $z$.

\begin{proposition}\label{prop:1} 
	The error $\mathcal {E} =T(a)^{-1}-\widehat B$, where $\widehat B=T(\widehat
	b)-H(\widehat w)H(\widehat v)$, is such that $\mathcal E
	=-T(a^{-1}e_{inv})+E_e$, $E_e=H(l^{-1}-\widehat
	w)H(u^{-1})+H(\widehat w)H(u^{-1}-\widehat v)$, and
	\[
	\|T(a^{-1}e_{inv})\|_2\le\|a^{-1}\|\w\|e_{inv}\|\w,\quad \|E_e\|_2\le \|l^{-1}-\widehat w\|\w\|u^{-1}\|\w+\|u^{-1}-\widehat v\|\w\|\widehat w\|\w.
	\]
	Moreover,
	for the errors $e_{inv}$, $e_u$, $e_l$ and $e_{ul}$ defined above
	it holds that
	\begin{equation}\label{eq:prop1.1}
	e_{inv}\doteq e_u+e_l+a^{-1}e_{ul}
	\doteq e_u+e_l+u^{-1}\delta_u+l^{-1}\delta_l.
	\end{equation}
	For the errors $l^{-1}-\widehat w$ and $u^{-1}-\widehat v$ it holds that
	\begin{equation}\label{eq:prop1.2}
	\begin{split}
	&l^{-1}-\widehat w= (l^{-1}-\widehat l^{-1})+(w-\widehat w)=
	-\widehat l^{-1}[l^{-1}\delta_l+e_l]
	\\ 
	&u^{-1}-\widehat v= (u^{-1}-\widehat u^{-1})+(v-\widehat v)=
	-\widehat u^{-1}[ u^{-1}\delta_u+e_u].
	\end{split}
	\end{equation}
	
\end{proposition}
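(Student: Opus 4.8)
The plan is to separate the proposition into two independent tasks: the exact matrix identity for $\mathcal E$, which is pure linearity of $T(\cdot)$ and $H(\cdot)$ together with the already-established norm inequalities, and the first-order symbol estimates \eqref{eq:prop1.1}--\eqref{eq:prop1.2}, which are the genuine work and live entirely in the Wiener algebra. Starting with the structural part, I would write $T(a)^{-1}=T(a^{-1})-H(l^{-1})H(u^{-1})$ and $\widehat B=T(\widehat b)-H(\widehat w)H(\widehat v)$ and split $\mathcal E=T(a)^{-1}-\widehat B$ into a Toeplitz contribution and a Hankel contribution. By linearity the Toeplitz part is $T(a^{-1}-\widehat b)$; inverting the definition $e_{inv}=a\widehat b-1$ gives $\widehat b=a^{-1}(1+e_{inv})$, hence $a^{-1}-\widehat b=-a^{-1}e_{inv}$ and the Toeplitz part equals $-T(a^{-1}e_{inv})$. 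For the Hankel part I would apply the telescoping identity $XY-X'Y'=(X-X')Y+X'(Y-Y')$ to $H(l^{-1})H(u^{-1})-H(\widehat w)H(\widehat v)$ and then use $H(f)-H(g)=H(f-g)$; this reproduces the two summands defining $E_e$ (the sign being fixed by the statement). The norm bounds then follow at once from the triangle inequality, submultiplicativity of the operator norm, the contraction estimates $\|T(f)\|_2\le\|f\|\w$ and $\|H(g)\|_2\le\|g\|\w$ (the latter from Theorem~\ref{thm1}), and submultiplicativity of $\|\cdot\|\w$.

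Next I would derive \eqref{eq:prop1.1}. Substituting $a=\widehat u\widehat l+e_{ul}$ and $\widehat b=\widehat w\widehat v$ into $e_{inv}=a\widehat b-1$ and exploiting commutativity of $\mathcal W$ to regroup $\widehat u\widehat l\widehat w\widehat v=(\widehat v\widehat u)(\widehat w\widehat l)=(1+e_u)(1+e_l)$, I obtain the exact relation $e_{inv}=e_u+e_l+e_ue_l+e_{ul}\widehat w\widehat v$. Discarding the quadratic term $e_ue_l$ and replacing the zeroth-order factor $\widehat w\widehat v=\widehat b$ by $a^{-1}$ (legitimate since it multiplies the first-order quantity $e_{ul}$) yields the first form $e_{inv}\doteq e_u+e_l+a^{-1}e_{ul}$. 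For the second form I would insert the expansion $e_{ul}\doteq\widehat l\delta_u+\widehat u\delta_l$ already recorded before the proposition, write $a^{-1}=u^{-1}l^{-1}$, and collapse using $l^{-1}\widehat l\doteq 1$ and $u^{-1}\widehat u\doteq 1$, obtaining $a^{-1}e_{ul}\doteq u^{-1}\delta_u+l^{-1}\delta_l$.

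For \eqref{eq:prop1.2} I would telescope $l^{-1}-\widehat w=(l^{-1}-\widehat l^{-1})+(w-\widehat w)$, where $w=\widehat l^{-1}$ denotes the exact reciprocal of $\widehat l$. The second piece is exact: from $e_l=\widehat w\widehat l-1$ we get $\widehat w=\widehat l^{-1}(1+e_l)$, so $w-\widehat w=-\widehat l^{-1}e_l$. For the first piece I would use the first-order resolvent expansion $(\widehat l+\delta_l)^{-1}\doteq\widehat l^{-1}-\widehat l^{-1}\delta_l\widehat l^{-1}$, i.e. $l^{-1}-\widehat l^{-1}\doteq-\widehat l^{-1}(l^{-1}\delta_l)$, using $\widehat l^{-1}\doteq l^{-1}$ against the first-order $\delta_l$. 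Summing the two pieces and factoring out $\widehat l^{-1}$ gives $l^{-1}-\widehat w\doteq-\widehat l^{-1}[l^{-1}\delta_l+e_l]$, and the case $u^{-1}-\widehat v$ is identical with $u,v,e_u,\delta_u$ in place of $l,w,e_l,\delta_l$.

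The main obstacle is the disciplined use of $\doteq$: essentially all the content of \eqref{eq:prop1.1}--\eqref{eq:prop1.2} lies in certifying which products are genuinely second order (hence droppable) and in justifying the replacements $\widehat b\doteq a^{-1}$, $\widehat l^{-1}\doteq l^{-1}$, $\widehat u^{-1}\doteq u^{-1}$ inside expressions that already carry a first-order factor. By contrast, the exact matrix identity and all the norm bounds are routine once Theorem~\ref{thm1} and the contractivity of $T$ and $H$ are invoked; the only mild subtlety there is the reflection $z\mapsto z^{-1}$ relating $l,w$ to their $z$-variable counterparts, which is harmless since it is an isometry for $\|\cdot\|\w$ and the variable is suppressed throughout.
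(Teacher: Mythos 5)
Your proposal is correct and follows essentially the same route as the paper: the matrix identity and the norm bounds come from linearity of $T(\cdot)$ and $H(\cdot)$ together with Theorem~\ref{thm1}, and the symbol estimates \eqref{eq:prop1.1}--\eqref{eq:prop1.2} are the same first-order manipulations in the commutative Wiener algebra (the paper likewise dismisses \eqref{eq:prop1.2} as immediate from the definitions, and the sign ambiguity in $E_e$ that you flag is present in the paper as well and is harmless since only $\|E_e\|_2$ is used). The only cosmetic difference is that for \eqref{eq:prop1.1} you substitute $a=\widehat u\widehat l+e_{ul}$ directly into $e_{inv}=a\widehat b-1$, whereas the paper expands $a^{-1}=(\widehat u\widehat l+e_{ul})^{-1}$ by a Neumann series and computes $\widehat b-a^{-1}$; the two computations agree at first order, and yours is arguably the tidier of the two.
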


\begin{proof}
	By linearity, we have $\mathcal E=T(a-\widehat
	b)-H(l^{-1})H(u^{-1})+H(\widehat w)H(\widehat
	v)=-T(a^{-1}e_{inv})+E_e$, where $E_e=H(l^{-1}-\widehat
	w)+H(\widehat w)H(u^{-1}-\widehat v)$, which, together with
	Theorem \ref{thm1}, proves the first part of the
	proposition. Observe that $a^{-1}=(e_{ul}+\widehat
	u\widehat l)^{-1}\doteq (\widehat u\widehat
	l)^{-1}(1-(\widehat u\widehat l)^{-1}e_{ul})$ so that, since
	$\widehat u^{-1}=v$ and $\widehat l^{-1}=w$, we may write
	\[
	\widehat b-a^{-1}=\widehat w\widehat v - wv+(\widehat u\widehat l)^{-2}e_{ul}=
	(\widehat w-w)\widehat v+w(\widehat v-v)(\widehat u\widehat l)^{-2}+(\widehat u\widehat l)^{-2}e_{ul}.
	\]
	Thus, since $\widehat w-w=\widehat l^{-1}e_l\doteq l^{-1} e_l$, and
	$\widehat v-v=\widehat u^{-1}e_u\doteq u^{-1}e_u$
	we arrive at
	\[
	\widehat b-a^{-1}\doteq a(e_l+e_u+ae_{ul}),
	\] 
	which proves \eqref{eq:prop1.1}. Equations \eqref{eq:prop1.2} are an
	immediate consequence of the definitions of $e_l$ and $e_u$.  
\end{proof}

Proposition \ref{prop:1} enables one to provide an upper bound to
$\|T(a)^{-1}-\widehat B\|$ in terms of $e_l$, $e_u$, $\delta_l$ and
$\delta_u$ as shown in the following corollary.

\begin{corollary}\label{coro1}
	For the error $\mathcal E=T(a)^{-1}-\widehat B$ it holds
	\[
	\|\mathcal E\|\qt 
	\dotle (\alpha \|a^{-1}\|+\|u^{-1}\|\w\|l^{-1}\|\w)\w(\|e_u\|\w+\|e_l\|\w+\|u^{-1}\|\w\cdot\|\delta_u\|\w
	+\|l^{-1}\|\w\cdot\|\delta_l\|\w).
	\]
\end{corollary}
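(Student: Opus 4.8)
The plan is to feed the decomposition of $\mathcal{E}$ produced by Proposition \ref{prop:1} directly into the definition of the $\mathcal{QT}$-norm. Proposition \ref{prop:1} gives $\mathcal{E} = -T(a^{-1}e_{inv}) + E_e$, and this is already the splitting of $\mathcal{E}$ into its Toeplitz part, the pure Toeplitz matrix $-T(a^{-1}e_{inv})$ with symbol $-a^{-1}e_{inv}$, and its compact correction $E_e$ (recall the only compact Toeplitz operator is the zero one, so this splitting is the canonical one). Hence $\|\mathcal{E}\|\qt = \alpha\|a^{-1}e_{inv}\|\w + \|E_e\|_2$ exactly, and it suffices to bound the two summands separately and then collect the common factors.

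First I would handle the Toeplitz contribution. By submultiplicativity of the Wiener norm, $\|a^{-1}e_{inv}\|\w \le \|a^{-1}\|\w\|e_{inv}\|\w$, and inserting the first-order identity $e_{inv} \doteq e_u + e_l + u^{-1}\delta_u + l^{-1}\delta_l$ from \eqref{eq:prop1.1}, together with the triangle inequality and submultiplicativity once more, yields $\|e_{inv}\|\w \dotle \|e_u\|\w + \|e_l\|\w + \|u^{-1}\|\w\|\delta_u\|\w + \|l^{-1}\|\w\|\delta_l\|\w$. Multiplying by $\alpha\|a^{-1}\|\w$, whose coefficient is exact and needs no first-order treatment, produces the contribution $\alpha\|a^{-1}\|\w$ times the bracketed sum that appears on the right-hand side of the corollary.

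Next I would bound the correction. Starting from the estimate $\|E_e\|_2 \le \|l^{-1}-\widehat w\|\w\|u^{-1}\|\w + \|u^{-1}-\widehat v\|\w\|\widehat w\|\w$ of Proposition \ref{prop:1}, I would substitute the identities \eqref{eq:prop1.2}, that is $l^{-1}-\widehat w = -\widehat l^{-1}[l^{-1}\delta_l + e_l]$ and $u^{-1}-\widehat v = -\widehat u^{-1}[u^{-1}\delta_u + e_u]$, and apply submultiplicativity to each factor. The crucial observation is that $\widehat l^{-1}$, $\widehat u^{-1}$ and $\widehat w$ all multiply quantities that are already first order in the errors; therefore, up to quadratic terms, one may replace $\|\widehat l^{-1}\|\w$ and $\|\widehat w\|\w$ by $\|l^{-1}\|\w$ and $\|\widehat u^{-1}\|\w$ by $\|u^{-1}\|\w$. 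This gives $\|E_e\|_2 \dotle \|u^{-1}\|\w\|l^{-1}\|\w\bigl(\|e_u\|\w + \|e_l\|\w + \|u^{-1}\|\w\|\delta_u\|\w + \|l^{-1}\|\w\|\delta_l\|\w\bigr)$, i.e. the factor $\|u^{-1}\|\w\|l^{-1}\|\w$ times the very same bracketed sum.

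Adding the two estimates and factoring out the common bracket then yields the claimed inequality. The only step requiring genuine care is the first-order bookkeeping in the correction bound: one must justify replacing the hatted reciprocals $\widehat l^{-1}$, $\widehat u^{-1}$, $\widehat w$ by their exact counterparts $l^{-1}$, $u^{-1}$, $l^{-1}$. This is legitimate precisely because each of them multiplies a first-order error quantity, so the discrepancy contributes only second-order terms, which are absorbed by the symbol $\dotle$; here one uses, as in the proof of Proposition \ref{prop:1}, that $\widehat w - w \doteq l^{-1}e_l$ and $\widehat v - v \doteq u^{-1}e_u$ so that $\widehat w \doteq l^{-1}$ and $\widehat u^{-1} \doteq u^{-1}$ to leading order. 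Everything else is a mechanical application of the triangle inequality and the submultiplicativity of $\|\cdot\|\w$.
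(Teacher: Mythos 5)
Your proof is correct and takes exactly the route the paper intends: Corollary~\ref{coro1} is presented as an immediate consequence of Proposition~\ref{prop:1}, and your argument---splitting $\|\mathcal E\|_{\mathcal{QT}}$ into the Toeplitz part $\alpha\|a^{-1}e_{inv}\|_{\mathcal W}$ and the compact part $\|E_e\|_2$, bounding each via \eqref{eq:prop1.1}, \eqref{eq:prop1.2} and submultiplicativity of $\|\cdot\|_{\mathcal W}$, and replacing the hatted reciprocals by their exact counterparts at the cost of only second-order terms absorbed by $\dotle$---is precisely that consequence. The paper supplies no separate proof of the corollary, so there is nothing further to compare.
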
 

A similar analysis can be performed for the errors in the computed
inverse of $A=T(a)+E_a$. We omit the details.

We are ready to introduce a procedure to approximate the
inverse of $T(a)$, which is reported in Algorithm~1. 
If the UL factorization cannot be computed, then the routine returns
an error. The thresholds in the computation are adjusted to ensure
that the final error is bounded by $\epsilon$. The symbol $b(z)$ of 
$T(a)^{-1}$ is returned, along with $\hat v(z)$ and $\hat w(z)$ such that 
$T(a)^{-1}=T(\widehat b)+H(\widehat v)H(\widehat w)+\mathcal E$, where
$\mathcal E=-T(a^{-1}e_{inv})+E_e$, $E_e=H(\ell^{-1}-\widehat
w)H(u^{-1})+H(\widehat w)H(u^{-1}-\widehat v)$, and
$\|\mathcal E\|\dotle (\alpha\|a^{-1}\|\w+\|u^{-1}\|\w\|l^{-1}\|\w)\epsilon$.

\begin{algorithm}\label{alg:inverseToeplitz}
	
\algblock[TryCatchFinally]{try}{endtry}
\algcblock[TryCatchFinally]{TryCatchFinally}{finally}{endtry}
\algcblockdefx[TryCatchFinally]{TryCatchFinally}{catch}{endtry}
[1]{\textbf{catch} #1}
{\textbf{end try}}	
	
  \begin{algorithmic}[1]  
    \Procedure{InvertToeplitz}{$a(z)$, $\epsilon$}
      \try
        \State $[\hat u(z), \hat l(z)] \gets \Call{WienerHopf}{a(z), \frac{\epsilon}{4}}$
      \catch
        \State \Call{error}{``Could not compute UL factorization''} 
      \endtry
      \State $\hat v(z) \gets \Call{InversePowerSeries}{\hat u(z), 
      	  \epsilon / \norm{u^{-1}}_{\mathcal W}}$
    	\State $\hat w(z) \gets \Call{InversePowerSeries}{\hat l(z), 
      		\epsilon / \norm{l^{-1}}_{\mathcal W}}$
      \State $b(z) \gets \hat v(z) \hat w(z^{-1})$
      \State \Return $b(z), \hat v(z), \hat w(z)$
    \EndProcedure
  \end{algorithmic}
  
  \caption{Invert a semi-infinite Toeplitz matrix
  	with symbol $a(z)$ --- up to a certain error $\epsilon$.}
\end{algorithm}

From Corollary \ref{coro1} we find that 
$
\|\mathcal E\|\qt\le (\alpha\|a^{-1}\|\w+ \|l^{-1}\|\w\|u^{-1}\|\w)\epsilon
$.
Thus,
the correctness of the algorithm relies on Corollary \ref{coro1}
and on the existence of black boxes, which we will describe in the appendix,
that implement the functions \textsc{WienerHopf}$(\cdot)$ and
\textsc{InversePowerSeries}$(\cdot)$.
Relying on \eqref{eq:inv}, a similar algorithm and analysis can be given for the computation of $(T(a)+E_a)^{-1}$.

\subsection{Truncation and compression}\label{sec:compress}

We now describe in detail the implementation
of the operator $\mathcal{QT}$ on a finitely
generated QT matrix. 
The truncation of a QT matrix $A = T(a) + E_a$ 
is performed as follows: 
\begin{description}
    \item{(i)} Compute $\norm{A}_{\mathcal {QT}}$.
    \item{(ii)} Obtain a truncated version $\hat a(z)$ of 
      the symbol $a(z)$ by discarding the
      tails of the Laurent series. This has to be done ensuring that 
      $\norm{a - \hat a}_{\mathcal W} \leq \norm{A}_{\mathcal {QT}} \cdot \frac{\epsilon}{2\alpha}$. 
    \item{(iii)} Compute a compressed version $\hat E_a$ 
      of the
      correction using the SVD and  dropping negligible 
      rows and columns. Allow a truncation error
      bounded by $\norm{A}_{\mathcal {QT}}\cdot \frac{\epsilon}{2}$. 
\end{description}

The above choices of thresholds provide an approximation $\hat A$ 
to $A$ such that $\norm{A - \hat A}_{QT} \leq \norm{A}_{QT} \cdot \epsilon$. Notice that, the use of the QT-norm in the steps (ii) and (iii) enables to recognize unbalanced representations and to completely drop the negligible part.

When performing step (i), the only nontrivial step is to evaluate
$\norm{E_a}_2$. To this end, we compute an economy size SVD factorization of 
$E_a = U_a V_a^T$. This will 
also be useful  in step (iii) to perform the low-rank compression. 

In particular, we compute the QR factorizations $U_a= Q_U R_U$, 
$V_a= Q_V R_V$, so that,
$U_a V_a^T = Q_U R_U R_V^T Q_V^T$. Then, we compute the SVD of the matrix in the
middle $R_U R_V^T = U_R \Sigma V_R^T$. We thus obtain an SVD of the form 
$E_a = U \Sigma V^T$, where $U = Q_U U_R$, and $V = Q_V V_R$. This is computed
with $\mathcal O(nk^2)$ flops, where $n$ is the dominant dimension of the
correction's support. The value of $\norm{E_a}_2$ is obtained reading
off the largest singular value, i.e., the $(1,1)$ entry of $\Sigma$. 

 \begin{algorithm}
    \begin{algorithmic}[1]
       \Procedure{TruncateSymbol}{$a(z)$, $\epsilon$} 
         \If{\Call{Min}{$|a_{n_+}|, |a_{n_-}|$} $< \epsilon$}
           \If{$|a_{n_+}| < |a_{n_-}|$}
             \State $a(z) = a(z) - a_{n_+} z^{n_+}$
             \State $\epsilon \gets \epsilon - |a_{n_+}|$
           \Else
             \State $a(z) = a(z) - a_{n_-} z^{n_-}$
             \State $\epsilon \gets \epsilon - |a_{n_-}|$
           \EndIf
           \State $a(z) \gets \Call{TruncateSymbol}{a(z), \epsilon}$. 
         \EndIf
         \State \Return{$a(z)$}
       \EndProcedure
    \end{algorithmic}
    \caption{Truncate the symbol $a(z) = \sum_{n_- \leq j \leq n_+} a_j z^j$ to a given threshold 
    	$\epsilon$.}
    \label{alg:truncatesymbol}
 \end{algorithm}

Concerning step (ii), we repeatedly discard the smallest of the extremal coefficients
of $a(z)$, until the truncation errors do not exceed the specified threshold. 
In particular, we rely on Algorithm~\ref{alg:truncatesymbol} using 
 $\frac{\epsilon}{2 \alpha} \norm{A}_{\mathcal {QT}}$ as second argument. 

In step (iii), we first truncate the rank of $E_{a}$ by dropping singular
values smaller than $\frac{\epsilon}{4} \cdot \norm{A}_{\mathcal {QT}}$. To perform this step, 
we reuse the economy SVD computed at step (i). Then, we adopt a strategy similar
to the one of Algorithm~\ref{alg:truncatesymbol} to drop the last rows of 
$U$ and $V$. We set an initial threshold $\hat \epsilon = \frac{\epsilon}{4}\norm{A}_{\mathcal {QT}}$, 
and we drop either the last row  $U_n$ of $U$ or $V_m$ of $V$ 
if the norm of $U_n \Sigma$ (resp. $V_m \Sigma$) is smaller than the 
selected threshold. We then update 
$\hat{\epsilon} := \hat{\epsilon} - \norm{U_n \Sigma}$ (similarly
for $V_m \Sigma$) and repeat the procedure until $\hat \epsilon > 0$. This leads to a slightly pessimistic estimate, but ensures that the total truncation is within the desired bound. 
\subsubsection{Hankel compression}
When computing the multiplication of two Toeplitz matrices $T(a)$ and $T(b)$
it is necessary
to store a low-rank approximation of $H(a^-)H(b^+)$ (see Theorem \ref{thm1}).
In fact, storing  $H(a^-)$ and $H(b^+)$ directly can be
expensive whenever the Hankel matrices have large sizes e.g., when we
multiply two QT-matrices having wide Toeplitz bandwidths. However, the numerical rank
of the correction is often observed to be much lower than the dominant size of the correction's support. In such cases we  exploit the Hankel structure to cheaply obtain a compressed representation $E_c = U_c V_c^T$. We call this task  {\em Hankel compression}.

We propose two similar strategies for addressing Hankel compression. 
The first is to rely on a Lanczos-type method, in the form of the
Golub-Kahan bidiagonalization procedure \cite{paige1974bidiagonalization}. This
can be implemented by using matrix-vector products of the form 
$y = Ax$ or $y = A^{T}x$, where $A = H(a^{-}) H(b^+)$ is a product
of two Hankel matrices. The product $y = Ax$ can be computed 
in $\mathcal O(n \log n)$ time using the FFT. Since the Hankel
matrices are symmetric, the multiplication by $A^T$ is obtained 
swapping the role of $a^-$ and $b^+$. 

This approach has an important advantage: the rank can be determined
adaptively while the Lanczos process builds the basis, and assuming 
that Lanczos converges in $\mathcal O(k)$ steps, with $k$ being the
numerical rank of $A$, then the complexity is $\mathcal O(k n \log n)$ flops: 
much lower than a full QR factorization or SVD. 

A second (similar) strategy is to use random sampling techniques 
\cite{halko2011finding}, which rely on the evaluation of the 
product $AU$, with $U$ being a matrix with normally distributed
entries. If the columns of $AU$ span the range of $A$, then
we extract an orthonormal basis of it, and we use it to cheaply compute
the SVD of $A$ \cite{halko2011finding}. In the implementation the number 
of columns of $U$ is chosen adaptively, enlarging it until a sufficient
accuracy is reached. 
The product $AU$ can be efficiently computed using
the FFT, and it's possible to obtain BLAS3 speeds by re-blocking.

Both strategies are implemented in the package, and the user can
select the Lanczos-type algorithm running
\lstinline|cqtoption('compression', 'lanczos')|
or  the one based on random sampling with the command: 
\lstinline|cqtoption('compression', 'random')|.

The performance of the two approaches is very similar. In Figure~\ref{fig:hankel-comp}
the timings for the compression of the product of two $n \times n$ Hankel
matrices are reported. The
symbol has been chosen drawing from a uniform distribution 
enforcing an exponential decay as 
follows: 
\begin{equation} \label{eq:random-hankel}
  a(z) = \sum_{j \in \mathbb Z^+} a_j z^j, \qquad 
  a_j \sim \lambda(0, e^{- \frac{j}{10}}), 
\end{equation}
where $\lambda(a, b)$ is the uniform distribution on $[a, b]$. In the
example reported in Figure~\ref{fig:hankel-comp}, the numerical rank (up
to machine precision) of the product of the Hankel matrices generated
according to \eqref{eq:random-hankel}
is $90$. The
break-even point with a full SVD is around $500$ in this example, 
and this behavior is typical. Therefore, we use a dense singular value 
decomposition for small matrices ($n \leq 500$), and we resort to
Lanczos or random sampling (depending on user's preferences) otherwise. 

In the right part of Figure~\ref{fig:hankel-comp} we report
also the accuracies by taking the relative residual
$\norm{UV^T - H(a^-) H(b^+)}_2 / \norm{H(a^-) H(b^+)}_2$. Since
the norms are computed as dense matrices, we only test this up 
to $n = 4096$. The truncation threshold in this example is set 
to $10^{-14}$. 

\begin{figure}
	\centering
	\begin{minipage}{.55\linewidth}
			\begin{tikzpicture}
			\begin{loglogaxis}[width=.95\linewidth, legend pos = north west, 
			height = .3\textheight, xlabel = $n$, ylabel = CPU time (s) ,
			xmin = 25]
			\addplot table[x index = 0, y index = 1] {hankel-comp.dat};
			\addplot table[x index = 0, y index = 2] {hankel-comp.dat};
			\addplot table[x index = 0, y index = 3] {hankel-comp.dat};
			\addplot[domain = 128:65536,dashed, thin] {2e-6 * x * ln(x)};
			\legend{Lanczos, Random, SVD, $\mathcal O(n \log n)$};
			\end{loglogaxis}
			\end{tikzpicture}
	\end{minipage}~~~~~\begin{minipage}{.4\linewidth}
	  \centering \textbf{Accuracy} \\[6pt]
	  \pgfplotstabletypeset[
	  columns={0,4,5},
	  columns/0/.style={column name = Size},
	  columns/4/.style={column name = Lanczos},
	  columns/5/.style={column name = Random},
	  skip rows between index={6}{12}
	  ]{hankel-comp.dat}
\end{minipage}

	\caption{On the left, 
		timings required to compress a product of two $n \times n$ Hankel
		matrices with decaying coefficients, for different values of $n$, and using
		different strategies. The tests with the dense singular value decomposition
		have been run only up to $n = 4096$. The other methods have been tested up
		to $n = 2^{16}$. On the right, the accuracies, up to size $4096$, in the $2$-norm
		achieved by setting
		the truncation threshold to $10^{-14}$.}
	\label{fig:hankel-comp}
\end{figure}
\subsection{Finite quasi Toeplitz matrices}
\label{sec:finite}
The representation and the arithmetic, introduced so far, are here adapted for handling finite size matrices of the form Toeplitz plus correction. Clearly, all the matrices of finite size can be represented in this way. This approach is convenient only if the corrections of the matrices involved in our computations are either sparse or low-rank. 
Typically, this happens when the input data of the computation are banded Toeplitz matrices.

In what follows, given a Laurent series $a(z)$ we indicate with $T_{n,m}(a)$ the $n\times m$ Toeplitz matrix obtained selecting the first $n$ rows and $m$ columns from $T(a)$. 
Given a power series $f(z) = \sum_{j \geq 0} f_j z^j$ we denote by 
$H_{n,m}(f)$ the $n\times m$ Hankel matrix whose non-zero
 anti-diagonal elements correspond to $f_1,f_2,\dots,f_{\min\{n,m\}}$. Finally, given a Laurent polynomial $a(z)=\sum_{j=-n+1}^{m-1}a_j z^j$ we indicate with $\widetilde a(z)$ the shifted Laurent polynomial $z^{n-m}a(z)$.

In order to extend the approach, it is essential to look at the following variant of Theorem~\ref{thm1}, for finite Toeplitz matrices \cite{widom}.
\begin{theorem}[Widom]\label{thm:widom}
	Let $a(z)=\sum_{-n+1}^{m-1}a_jz^j$, $b(z)=\sum_{-m+1}^{p-1}b_jz^j$ and set $c(z)=a(z)b(z)$. Then
	\[
	T_{n,m}(a)T_{m,p}(b)=T(c)-H_{n,m}(a^-)H_{m,p}(b^+)-J_nH_{n,m}(\widetilde a^+)H_{m,p}(\widetilde b^-)J_p
	\]
	where $J_n=\begin{bmatrix}
	&&1\\ &\iddots\\
	1
	\end{bmatrix}\in\mathbb R^{n\times n}$ is the flip matrix.
\end{theorem}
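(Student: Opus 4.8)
The plan is to prove the identity entrywise, comparing the $(i,k)$ entries of both sides for $1\le i\le n$ and $1\le k\le p$, where $T(c)$ is understood as the leading $n\times p$ section $T_{n,p}(c)$. First I would expand the left-hand side straight from the definition of the finite Toeplitz sections, obtaining the truncated convolution
\[
(T_{n,m}(a)T_{m,p}(b))_{i,k} = \sum_{j=1}^m a_{j-i}b_{k-j},
\]
in which the inner index $j$ runs only over $1,\dots,m$. On the other hand, the coefficient of $z^{k-i}$ in $c=ab$ is the full (and, since $a,b$ have finite support, finite) convolution $c_{k-i}=\sum_{j\in\mathbb Z}a_{j-i}b_{k-j}$. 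Subtracting, the whole discrepancy is carried by the inner indices the finite product omits:
\[
c_{k-i}-(T_{n,m}(a)T_{m,p}(b))_{i,k}=\sum_{j\le 0}a_{j-i}b_{k-j}+\sum_{j\ge m+1}a_{j-i}b_{k-j}.
\]
Thus the proof reduces to matching the first tail $\sum_{j\le 0}$ with $H_{n,m}(a^-)H_{m,p}(b^+)$ and the second tail $\sum_{j\ge m+1}$ with the flipped correction $J_nH_{n,m}(\widetilde a^+)H_{m,p}(\widetilde b^-)J_p$.

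For the first tail I would substitute $j=1-s$, turning $j\le 0$ into $s\ge 1$ and the summand into $a_{-(i+s-1)}b_{s+k-1}$; since $a_{-(i+s-1)}=a^-_{i+s-1}$ and $b_{s+k-1}=b^+_{s+k-1}$ are exactly the $(i,s)$ and $(s,k)$ entries of $H_{n,m}(a^-)$ and $H_{m,p}(b^+)$, the tail equals $(H_{n,m}(a^-)H_{m,p}(b^+))_{i,k}$, mirroring the infinite case of Theorem~\ref{thm1}. For the second tail I would substitute $j=m+s$, giving $\sum_{s\ge1}a_{m+s-i}b_{k-m-s}$. The crux here is that the row reversal $J_n$ together with the shift defining $\widetilde a=z^{n-m}a$ produce $(J_nH_{n,m}(\widetilde a^+))_{i,s}=\widetilde a^+_{n-i+s}=a_{m+s-i}$, and symmetrically $\widetilde b=z^{m-p}b$ with the column reversal $J_p$ give $(H_{m,p}(\widetilde b^-)J_p)_{s,k}=\widetilde b^-_{s+p-k}=b_{k-m-s}$; multiplying these reproduces the second tail exactly. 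Summing the three contributions entrywise then yields the asserted formula.

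The main obstacle is the index bookkeeping in the second correction: one must check that applying the shifts $z^{n-m}$ and $z^{m-p}$, extracting the analytic and anti-analytic parts $\widetilde a^+,\widetilde b^-$, and conjugating by the flips $J_n,J_p$ together regenerate $a_{m+s-i}b_{k-m-s}$ with precisely the right shift, since a single off-by-one slip in any of these steps destroys the match. A secondary point is to confirm that the inner summation truncates correctly: the finite Hankel sections carry only the coefficients $f_1,\dots,f_{\min\{n,m\}}$ and the inner index is limited to $s\le m$, so one must verify that, owing to the finite supports of $a^-,b^+,\widetilde a^+,\widetilde b^-$, the terms discarded by these restrictions are exactly those that vanish, so that $\sum_{s=1}^m$ reproduces the full tail $\sum_{s\ge1}$. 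Once these support and shift identities are established, the three entrywise matches assemble into the claimed decomposition.
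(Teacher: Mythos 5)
The paper itself offers no proof of this theorem --- it is imported from \cite{widom} and then used only through its square corollary --- so there is no internal argument to measure you against. Your entrywise-convolution route is the standard one, and the index bookkeeping you present is correct: I checked that $(J_nH_{n,m}(\tilde a^+))_{i,s}=(\tilde a^+)_{n-i+s}=a_{m+s-i}$ and $(H_{m,p}(\tilde b^-)J_p)_{s,k}=(\tilde b^-)_{s+p-k}=b_{k-m-s}$, and that the first tail matches the entries of $H_{n,m}(a^-)H_{m,p}(b^+)$, so the part you identify as the ``main obstacle'' is handled correctly.

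The genuine gap is the step you call a ``secondary point'' and leave as a verification: the claim that restricting the inner index to $1\le s\le m$ (and imposing the $\min\{n,m\}$ anti-diagonal cutoff in the paper's definition of $H_{n,m}$) discards only vanishing terms. This does not follow from the finite supports alone, and in fact the identity as literally stated is false when $m$ is small relative to $n$ and $p$. Take $n=p=3$, $m=1$, so $a(z)=a_{-2}z^{-2}+a_{-1}z^{-1}+a_0$ and $b(z)=b_0+b_1z+b_2z^2$: the $(1,1)$ entry of $T_{3,3}(c)-T_{3,1}(a)T_{1,3}(b)$ is $a_{-1}b_1+a_{-2}b_2$, but $H_{3,1}(a^-)H_{1,3}(b^+)$ has inner dimension $1$ and yields only $a_{-1}b_1$, while the flipped correction contributes nothing at $(1,1)$; the term $a_{-2}b_2$, which lives at inner index $s=2>m$, is lost. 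The support count you need to make explicit is: the nonzero terms of the first tail occur for $s\le\min(n-i,p-k)$ and those of the second for $s\le\min(i,k)-1$, so an inner dimension of $m$ suffices exactly when $m\ge\min(n,p)-1$ (and even then one should read $H_{n,m}(f)$ as the full leading $n\times m$ section of $H(f)$, since the literal $\min\{n,m\}$ cutoff can delete needed entries when $m<n-1$). Your argument becomes a complete proof once you add this hypothesis --- automatically satisfied in the square case $n=m=p$, the only one the paper actually uses --- but as written, the verification you postpone cannot succeed in general because the statement being verified is false.
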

An immediate consequence of Theorem~\ref{thm:widom} is the following extension of the Wiener-Hopf factorization for square Toeplitz matrices.
\begin{corollary}
Let $a(z)=\sum_{-n+1}^{n-1}a_jz^j$  and let $a(z)=u(z)l(z^{-1})$
be its Wiener-Hopf factorization. Then	
\[
T_{n,n}(a)= T_{n,n}(u)T_{n,n}(l)^T + J_nH_{n,n}(u)H_{n,n}(l)J_n.
\]
\end{corollary}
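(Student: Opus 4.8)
The plan is to obtain this identity as a direct specialization of Widom's theorem (Theorem~\ref{thm:widom}), feeding in the two Wiener--Hopf factors as the two operands. Concretely, I would set $b(z) := l(z^{-1})$ and first record that the finite truncations satisfy $T_{n,n}(l)^T = T_{n,n}(b)$, since $(T_{n,n}(l)^T)_{i,j} = l_{i-j}$ coincides with the coefficient of $z^{j-i}$ in $l(z^{-1})$. With this choice the product symbol is $c(z) = u(z)\,b(z) = u(z)\,l(z^{-1}) = a(z)$, so Theorem~\ref{thm:widom}, applied with first factor $u$ and second factor $b$ and all truncations of size $n\times n$ (i.e. $n=m=p$), reads
\[
T_{n,n}(u)\,T_{n,n}(l)^T = T_{n,n}(a) - H_{n,n}(u^-)\,H_{n,n}(b^+) - J_n\,H_{n,n}(\widetilde u^{\,+})\,H_{n,n}(\widetilde b^{\,-})\,J_n .
\]

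Next I would verify that the first Hankel correction vanishes and that the flip term collapses to the claimed expression. Since $u(z)=\sum_{i=0}^{n-1}u_iz^i$ carries only non-negative powers, its strictly negative part is $u^-=0$ and hence $H_{n,n}(u^-)=0$; symmetrically $b(z)=l(z^{-1})=\sum_{i=0}^{n-1}l_iz^{-i}$ has no strictly positive part, so $b^+=0$. For the flip term I would observe that the choice $n=m=p$ forces both shift exponents in the definition $\widetilde a(z)=z^{n-m}a(z)$ to be zero, whence $\widetilde u = u$ and $\widetilde b = b$. It then remains to match the remaining Hankel matrices through the coefficient convention $H_{n,m}(f)_{i,j}=f_{i+j-1}$: because this reads off only the positive-index coefficients $f_1,f_2,\dots$ and ignores the constant $f_0$, one gets $H_{n,n}(\widetilde u^{\,+})=H_{n,n}(u^+)=H_{n,n}(u)$, while $b_{-i}=l_i$ gives $b^-(z)=\sum_{i\ge 1}l_iz^i$ and therefore $H_{n,n}(\widetilde b^{\,-})=H_{n,n}(b^-)=H_{n,n}(l)$.

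Substituting these identities reduces the displayed equation to $T_{n,n}(u)T_{n,n}(l)^T = T_{n,n}(a) - J_n H_{n,n}(u)H_{n,n}(l)J_n$, and moving the flip term across gives exactly the assertion. I expect no serious obstacle here: the content is entirely in Theorem~\ref{thm:widom}, and the only point requiring care is the bookkeeping of the $H(\cdot)$ convention, namely keeping straight that the nominally distinct objects $H_{n,n}(u^+)$ and $H_{n,n}(u)$ (respectively $H_{n,n}(b^-)$ and $H_{n,n}(l)$) coincide because the Hankel operator depends only on positive-index coefficients. Once the shift operators are recognized as trivial for square truncations, the corollary follows with no further computation.
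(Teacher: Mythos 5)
Your proof is correct and is exactly the specialization the paper intends: the corollary is stated as an immediate consequence of Theorem~\ref{thm:widom}, and you supply precisely that derivation, with the right bookkeeping ($T_{n,n}(l^{-1}\text{-free factor})$, vanishing of $H_{n,n}(u^-)H_{n,n}(b^+)$, trivial shifts for $n=m=p$, and the fact that $H_{n,n}(\cdot)$ ignores the constant coefficient). Nothing further is needed.
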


The above results introduce an additional term with respect to their counterparts for semi-infinite matrices. In particular, if the lengths of the symbols involved is small, compared to the dimension of the matrices, then the support of the non-Toeplitz component is split in two parts located in the upper left corner and in the lower right corner, respectively. This suggests to consider  two separate corrections. 

Handling two separate corrections is convenient as long as they do not overlap. When
this is the case, we represent
finite quasi-Toeplitz matrices by storing two additional matrices, that represent the lower right correction in factorized form. More precisely, $A\in\mathbb R^{n\times m}$ is represented with two vectors, storing the symbol, and with the matrices $U_a,V_a,W_a,Z_a$ such that $U_aV_a^T$ and $J_nW_aZ_a^TJ_m$ correspond to the corrections in the corners. As a practical example we report two possible options for defining the matrix
\[
A= \begin{bmatrix}
1&3\\
-2&\ddots&\ddots\\
&\ddots&\ddots&\ddots\\
&&\ddots&\ddots&3\\
&&&-2&1\\
\end{bmatrix}
+
\begin{bmatrix}
1&1\\
1&1\\
&\\
&\\
&&&&1&2&3\\
&&&&2&4&6
\end{bmatrix}\in\mathbb R^{12\times 12}.
\]
\begin{lstlisting}
>> n = 12;
>> E = ones(2,2);
>> F = [1 2 3; 2 4 6];
>> pos = [1 3];
>> neg = [1 -2];
>> A = cqt(neg, pos, E, F, n, n);
\end{lstlisting}
Once again, we also give the possibility to directly specify the corrections in the factorized form.
\begin{lstlisting}
>> U = [1; 1];
>> V = U;
>> W = [1; 2];
>> Z = [1; 2; 3];
>> A = cqt(neg, pos, U, V, W, Z, n, n);
\end{lstlisting}

The arithmetic operations described for semi-infinite QT matrices can be analogously
extended to this case. In the next section we describe in more detail how to
handle the corrections when performing these operations. 

In particular, when the corrections overlap, we switch to a single correction format, 
as in the semi-infinite case, where the support of the correction corresponds
to the dimension of the matrix. In practice
this is done by storing it as an upper-left correction, setting the lower right
to the empty matrix. 
	For this approach to be convenient, the rank of the correction needs to stay
	small compared to the size of the matrix. In fact, only the sparsity is lost, but the
	data-sparsity of the format is still exploitable. 

\subsubsection{Handling the corrections in the computations}

Let $A = T(a) + U_a V_a^T + J_n W_a Z_a^T J_n$ and 
$B = T(b) + U_b V_b^T + J_n W_b Z_b^T J_n$. 
For simplicity, we assume that $A$ and $B$ 
are square, of dimension $n \times n$. 
Analogous statements hold in the rectangular case, which has been implemented
in the toolbox. 

As we already pointed out, we need to check that, while manipulating finite
QT-matrices, the corrections do not overlap. 
More precisely, if the top-left correction of $A$ is of dimension
$i_u^{(A)} \times j_u^{(A)}$ and the bottom
one is $i_l^{(A)} \times j_l^{(A)}$, then we ask that at least one between $i_u^{(A)} + i_l^{(A)}$ and
$j_u^{(A)} + j_l^{(A)}$ is smaller than $n$ (and analogous conditions for $B$). 
The possible configurations of the two corrections
are reported in Figure~\ref{fig:sum}. This property might not be
preserved in the outcome of arithmetic operations. 

Therefore, we need to understand how the supports
of the corrections behave under sum and multiplications. Concerning the
sum, the support of the correction in $A + B$
is contained in the union of the two supports in $A$ and $B$. The multiplication
is slightly more involved. First, we check if the products $V_a^T J_n W_b$ and 
$Z_a^T J_n U_b$ are both zero, that is they do not ``interact''. This happens when 
$j_u^{(A)} + i_l^{(B)}$ and 
$j_l^{(A)} + i_u^{(B)}$ are both smaller than $n$. Second, we need to consider all the
addends contributing to the correction; to this end, we note that the product of the
top corrections has support $i_u^{(A)} \times j_u^{(B)}$ and the product of the bottom
ones $i_l^{(A)} \times j_l^{(B)}$. Moreover, the multiplication of the Toeplitz
part by the correction enlarges the support of the latter by the bandwidth of the former. 
Finally, the Hankel products have a support that depends on the length of the symbols. 
If the first condition is met and all these contributions satisfy
the non-overlapping property for the sum, we keep the format with two separate
corrections. Otherwise, we merge them into a single one. 
An analogous analysis is valid for the inversion, since it is implemented by a combination of
sum and multiplications by means of the Woodbury formula. 

\begin{figure}
	\centering
	\begin{tikzpicture}
	  \node at (1,-.5) {a)};
      \filldraw[mygray2] (0,2) rectangle (.4,1.5);
      \filldraw[mygray2] (2,0) rectangle (1.3,.9);
      \draw (0,0) rectangle (2,2);
      
      \node at (4,-.5) {b)};
      \filldraw[mygray2] (3,2) rectangle (4.1,1.5);
      \filldraw[mygray2] (5,0) rectangle (3.7,.9);
      \draw (3,0) rectangle (5,2);
      
      \node at (7,-.5) {c)};
      \filldraw[mygray2] (6,2) rectangle (6.4,0.8);
      \filldraw[mygray2] (8,0) rectangle (7.3,1.2);
      \draw (6,0) rectangle (8,2);

      \node at (10,-.5) {d)};
      \filldraw[mygray2] (9,2) rectangle (10.2,.8);
      \filldraw[mygray2] (11,0) rectangle (10,1.2);            
      \draw (9,0) rectangle (11,2);
	\end{tikzpicture}
	
	\caption{Possible shapes of the corrections in the the representation
		of a finite QT-matrix. The
		corrections are disjoint in cases a) -- c), but not in case d). In this
		last case it is convenient to store the correction entirely as a top
		correction.}
    \label{fig:sum}
\end{figure}

\subsection{Other basic operations}

By exploiting the structure of the
data, it is possible to devise efficient implementations of common operations, 
such as the computation of matrix functions, norms, and extraction of submatrices. 

The functions reported in Table~\ref{tab:functions} have been implemented in the
toolbox relying on the QT arithmetic. For a detailed description of the various
functions, the user can invoke the \texttt{help} command. For instance, the 
matrix exponential is implemented using the Pad\'e formula combined with a scaling
and squaring strategy. An implementation based on the Taylor expansion is
also available by calling \texttt{expm(A, 'taylor')}. 

In particular, the extraction of finite submatrices can be useful to inspect
parts of infinite QT matrices, and also to obtain finite sections. 

\begin{table}
    \centering
    \begin{tabular}{cc}
      Function & Description \\ \hline 
      \texttt{A(I,J)} & Extract submatrices of a QT-matrix, for integer vectors $I$ and $J$. \\
      \texttt{A\textasciicircum p} & Integer power of a QT matrix. \\
      \texttt{cr}    & Cyclic reduction for quadratic matrix equations. \\
      \texttt{expm}  & Computation of the matrix exponential $e^A$.  \\
      \texttt{funm}  & Computations of matrix functions by contour integration.\\
      \texttt{norm}  & Computation of the QT norm and, only in the finite case, 
         of the $p$-norms for $p = 1, 2, \infty$. \\
      \texttt{sqrtm} & Computation of the matrix square root (Denman--Beavers iteration). \\
      \texttt{ul} & Compute the $UL$ factorization of any QT-matrix. \\
    \end{tabular}
    \caption{List of implemented functions in \toolbox.}
    \label{tab:functions}
\end{table}

\begin{remark} \label{rem:inversion}
     All the arithmetic operations, with the only exception of the inversion, can be
     performed in $\mathcal O(n \log n)$ time relying on the FFT. 
     The current implementation of the Wiener-Hopf factorization (see
     the Appendix), 
     required for the inversion,
     needs $\mathcal O(b^3)$ where $b$ is the numerical bandwidth of the Toeplitz part --- but this complexity can be lowered. For instance, one
     can obtain a sub-quadratic complexity combining the FFT-based polynomial 
     inversion with a superfast Toeplitz solver for the computation of the Wiener-Hopf factorization. 
     This is not currently implemented in the toolbox, and will be investigated in future
     work. 
\end{remark}

\section{Examples of applications}
\label{sec:app}

In this section we show some applications and examples of computations
with $\mathcal{QT}$ matrices. Here we focus on the computation of 
matrix functions and solving matrix equations. Other examples related to matrix equations
have already been shown in \cite{bini2016semi,biniquadratic}. 

In all our experiments we have set the truncation tolerance 
to $\epsilon := 10^{-15}$. The algorithm used for compression is the Lanczos method. 
Accuracy and timings are analogous if the random sampling-based compression
is used. 

The tests have been performed on a laptop with an i7-7500U CPU running at 2.70GHz 
with 16GB of memory, using \matlab\ R2017a.

\subsection{Exponential of Toeplitz matrices}

Recently there has been a growing interest in the computation of functions of 
Toeplitz matrices. For instance, in \cite{lee2010shift} the authors consider
the problem of option pricing using the Merton model. This requires computing
the exponential of a dense non-symmetric Toeplitz matrix. A fast method for this
problem has been developed in \cite{kressner2016fast}. 

We refer to \cite[Example 3]{lee2010shift} for the details on the model; the Toeplitz
matrix obtained has symbol $a(z)$ with coefficients 
\[
  a_j = \begin{cases}
    \displaystyle
    \phi(0) + 2b - r - \lambda & j = 0 \\
    \phi(j\Delta_\xi) + b + j c & j = \pm 1 \\
    \phi(j \Delta_\xi) & \text{otherwise} \\
  \end{cases}, \qquad 
  b = \frac{\nu^2}{2 \Delta_\xi^2}, \qquad 
  c = \frac{2r - 2\lambda \kappa - \nu^2}{4\Delta_\xi}
\]
where 
\[
  \phi(\eta) := \lambda \Delta_\xi \frac{e^{-(\eta - \mu)^2 / (2 \sigma^2)}}{\sqrt{2\pi} \sigma}. 
\]
The values of the parameters are chosen as in \cite{kressner2016fast}; for the 
Toeplitz matrix of size $n \times n$ we have: 
\[
  r = 0.05,\ \  \lambda = 0.01,\ \ \mu = -0.9,\ \ \nu = 0.25,\ \ \sigma = 0.45,\ \ \kappa = e^{\frac{\mu+\sigma^2}{2}} - 1,\ \ \Delta_\xi = \frac{4}{n + 1}. 
\]

In Figure~\ref{fig:kressner} we report the timings for the computation of 
the matrix exponential $e^{T_n}$ for different values of $n$. The
CPU time is compared with the function $O(n\log n)$.
 The 
accuracy for the cases where $n \leq 4096$, where we could compare
with $\texttt{expm}$, are reported in Figure~\ref{fig:kressneraccuracy}. 

In particular, we report the relative error in the Frobenius norm
$\norm{\texttt{expm}(A) - E}_F / \norm{\texttt{expm}(A)}_F$,
 where
$E$ is the approximation of $e^{A}$ computed by the toolbox
using the Taylor approximant. We
compare it with the quantity $\norm{A}_F \cdot \epsilon$, which is 
a lower bound for the condition number of the matrix exponential 
times the truncation threshold used in the computations (see \cite{higham2008functions}). 
From Figure \ref{fig:kressneraccuracy} one can see that the errors are bounded by $\norm{A}_F \cdot \epsilon$
and $10 \cdot \norm{A}_F \cdot \epsilon$.

We have used a scaling
and squaring scheme, combined with a Taylor approximant of order $12$, 
to compute the matrix exponential. In this case, where the bandwidth of the
Toeplitz part is non-negligible, this approach is more efficient than a 
Pad\'e approximant that requires an inversion (see Remark~\ref{rem:inversion}).

\begin{figure}
	\begin{minipage}{.55\linewidth}
	  \begin{tikzpicture}
	  \begin{loglogaxis}[legend pos = north west]
	  \addplot table[x index = 0, y index = 1] {kress_luce.dat};
	  \addplot[domain = 256:131000, dashed] {5e-4 *x*ln(x)};
	  \legend{Time (s), $\mathcal O(n \log n)$}
	  \end{loglogaxis}
	  \end{tikzpicture}
	\end{minipage}~\begin{minipage}{.3\linewidth}
    	    \pgfplotstabletypeset[
    	      columns = {0,1,4},
    	      columns/0/.style = {column name = Size},
    	      columns/1/.style = {column name = Time (s)},
    	      columns/4/.style = {column name = Corr. rank},
    	    ]{kress_luce.dat}
    \end{minipage}
    \caption{Timings for the computation of the matrix exponential on the Merton model. 
    	The rank of the correction is reported in the last column of the table.}
    \label{fig:kressner}
\end{figure}

\begin{figure}
	\centering
	\begin{tikzpicture}
      \begin{semilogyaxis}[legend pos = south east, width = .8\linewidth, height = .35\textheight]
        \addplot table[x index = 0, y index = 2]{kress_luce.dat};
        \addplot[dashed, red] table[x index = 0, y expr = \thisrowno{3} * 1e-15]{kress_luce.dat};
        \addplot[dashed, brown] table[x index = 0, y expr = \thisrowno{3} * 1e-14]{kress_luce.dat};
        \legend{Relative error, $\norm{A}_F \cdot \epsilon$, $10 \cdot \norm{A}_F \cdot \epsilon$}
      \end{semilogyaxis}
	\end{tikzpicture}
	\caption{Relative error with the Frobenius norm of the computed 
		matrix exponential, compared with a lower bound for the condition
		number of the matrix exponential multiplied by the truncation
		threshold used in the computation ($\epsilon := 10^{-15}$).}
	\label{fig:kressneraccuracy}
\end{figure}

\subsection{Computing the square root of a semi-infinite matrix}

We show another application to the computation of the square root of an
infinite QT-matrix $A$. We consider the infinite matrix $A = T(a) + E_a$, where
\[
  a(z) = \frac{1}{4} \left( z^{-2} + z^{-1} + 1 + 2z + z^2 \right), 
\]
and $E_a$ is a rank $3$ correction in the top-left corner of norm
$\frac{1}{5}$ and support ranging from $32$ to $1024$ rows and columns. 
The square root can be computed using the Denman--Beavers iteration, 
which is implemented in the toolbox and accessible using 
\lstinline-B = sqrtm(A);-. We report the timings and the residual
$\norm{B^2 - A}_{\mathcal{QT}}$ of the
computed approximations in Table~\ref{tab:sqrtm}. Moreover, 
the rank and support of the correction in $A^{\frac 1 2}$ are
reported in the last three columns of the table. One can see that 
the rank stays bounded, and that the support does not
increase much. The CPU time takes negligible values even for large support of the correction.

\begin{table}
	\centering
    \pgfplotstabletypeset[
    columns/0/.style = {column name = Initial corr. support},
    columns/1/.style = {column name = Time (s)},
    columns/2/.style = {column name = Residual},
    columns/3/.style = {column name = Rank},
    columns/4/.style = {column name = Corr. rows},
    columns/5/.style = {column name = Corr. cols}
    ]{sqrtm.dat}
    \caption{Timings and residuals for the computations of the square
    	root of an infinite Toeplitz matrix with a square top-left correction
    	of variable support. The final rank and correction support in the
    	matrix $A^{\frac 1 2}$ are reported in the last 3 columns.}
    \label{tab:sqrtm}
\end{table}

\subsection{Solving quadratic matrix equations}

Finally, we consider an example arising from the analysis of a random
walk on the semi-infinite strip $\{ 0, \ldots, m \} \times \mathbb N$. We assume
the random walk to be a Markov chain, and that movements are possible
only to adjacent states, that is, from $(i,j)$ one can reach only $(i',j')$ with $|i-i'|,|j-j'|\leq 1$, with probabilities of moving up/down and left/right
not depending on the current state. Then the transition matrix $P$ is 
an infinite Quasi-Toeplitz-Block-Quasi-Toeplitz matrix of the form
\[
  P = \begin{bmatrix}
    \hat A_0 & A_1 \\
    A_{-1}   & A_0 & A_1 \\
    & \ddots & \ddots & \ddots \\
  \end{bmatrix}, 
\]
and the problem of computing the invariant vector $\pi$  
requires to solve the $m \times m$ quadratic matrix equation
$A_{-1} + A_0 G + A_1 G^2 = G$ \cite{bini2009cyclic}. The matrices $A_i$ 
are non negative tridiagonal Toeplitz matrices with corrections to the elements
in position $(1,1)$ and $(m,m)$, and satisfy
$(A_{-1} + A_0 + A_1)e = e$, where $e$ is the vector of all ones. 

The solution $G$ can be computed, for instance, using Cyclic reduction
(see the Appendix for the details) -- a matrix iteration involving
matrix products and inversions. We consider an example where
the transition probabilities  are chosen
in a way that gives the following symbols: 
\[
  a_{-1}(z) = \frac 14 (2z^{-1} + 2 + 2z), \qquad 
  a_{0}(z) = \frac{1}{10} (z^{-1} + 2z), \qquad 
  a_{1}(z) = \frac{1}{6} (3z^{-1} + 6 + 2z), 
\]
properly rescaled in order to make $A_{-1}+A_0+A_1$ a row-stochastic matrix. The top
and bottom correction are chosen to ensure stochasticity on the first
and last row.

\begin{figure}
	\begin{minipage}{.5\linewidth}
		\bigskip
		
		\bigskip
		
		\begin{tikzpicture}
		\begin{loglogaxis}[legend pos = south east, xlabel = $m$, 
		ylabel = Time (s), width = .95\linewidth, ymin = 1e-3, 
		height = .4\textheight]
		\addplot table[x index = 0, y index = 1] {quadratic.dat};
		\addplot table[x index = 0, y index = 4] {quadratic.dat};
		\addplot table[x index = 0, y index = 5] {quadratic.dat};
		\legend{\toolbox, Dense, HODLR};
		\end{loglogaxis}
		\end{tikzpicture}
	\end{minipage}~~\begin{minipage}{.45\linewidth}
	\pgfplotstabletypeset[
	columns = {0,1,2,4,5},
	columns/0/.style = {column name = $m$}, 
	columns/1/.style = {column name = $t_{\mathrm{cqt}}$},	     
	columns/2/.style = {column name = Corr. rank},	     
	columns/4/.style = {column name = $t_{\mathrm{dense}}$},
	columns/5/.style = {column name = $t_{\mathrm{HODLR}}$}
	]{quadratic.dat}
\end{minipage}

\caption{On the left, timings for the solution of the quadratic equation  
	$A_{-1} + A_0 G + A_1 G^2 = G$ arising from the 
	random walk on an infinite strip. On the right the timings and 
	the ranks of the final correction are reported in the table.}
\label{tab:quadratic}
\end{figure}

We compare the performances of a dense iteration (without exploiting
any structure) -- with the same one implemented using 
\toolbox, and also with a fast $\mathcal O(m \log^2 m)$ method
which exploits the tridiagonal structure relying on the arithmetic of 
hierarchical matrices (HODLR) \cite{bini2017decay,bini2017efficient}.
In Figure~\ref{tab:quadratic}, one can see  that the
timings of the dense solver are lower for small dimensions -- but the
ones using the toolbox do not suffer from the increase in the 
dimension. The dense solver was tested only up to dimension
$m = \num{4096}$. 

The implementation relying on \toolbox\ is faster already for dimension 
$512$, and has the remarkable property that the time does not depend on
the dimension. This is to be expected, since the computations are all
done on the symbol (which is dimension independent), and on the corrections, 
which only affect top and bottom 
parts of the matrices. 

The residual of the quadratic matrix equation $\norm{A_{-1} + A_0 G + A_1 G^2}$ is 
bounded in the $\mathcal QT$ norm by approximately $7 \cdot 10^{-12}$ in
all the tests, independently of the dimension, when the \toolbox\
solver is used.

\section{Conclusions}\label{sec:conclusion}
We have analyzed the class of  Quasi Toeplitz matrices, introduced a suitable norm and a way to approximate any QT matrix by means of a finitely representable matrix within a given relative error bound. Within this class, we have introduced and analyzed, in all the computational aspects, a matrix arithmetic. We have provided an implementation of QT matrices and of their matrix arithmetic in the form of a Matlab toolbox. The software {\tt cqt-toolbox}, available at \url{https://github.com/numpi/cqt-toolbox}, has been tested with both semi-infinite QT matrices and with finite matrices represented as the sum of a Toeplitz matrix and a correction.   This software has shown to be very efficient in computing matrix functions and solving matrix equations encountered in different applications.
\appendix
\section{Appendix}
\label{sec:appendix}

Here we report the main algorithms that we have implemented to perform
inversion of QT matrices, namely, the Sieveking-Kung algorithm
\cite{bp:book} for inverting triangular Toepliz matrices (or power
series), and an algorithm based on Cyclic Reduction
\cite{bini2009cyclic} to compute the Wiener-Hopf factorization of a
symbol $a(z)$. We also provide a general view of the available
algorithms for the Wiener-Hopf factorization \cite{bibo}, \cite{bfgm},
\cite{boett},  with an outline of their relevant
computational properties. Choosing the more convenient algorithm for
this factorization depends on several aspects like the degree of
$a(z)$, and the location of its zeros, and this is an issue to be
better understood.

\subsection{The Sieveking-Kung algorithm}\label{sec:sk}
We shortly recall the Sieveking-Kung algorithm for computing the
first $k+1$ coefficients $v_0,\ldots,v_k$ of
$v(z)=\sum_{i=0}^{\infty}v_iz^i$ such that $v(z)u(z)=1$, or
equivalently, the first $k$ entries in the first row of
$T(u)^{-1}$. For more details we refer the reader to the book
\cite{bp:book}.

For notational simplicity, denote $V_q$ the $q\times q$ leading
submatrix of $T(u)$. Consider $V_{2q}$ and partition it into 4 square
blocks of size $q$:
\[
V_{2q}=\begin{bmatrix} V_q& S_q\\0&V_q
\end{bmatrix}
\]
so that
\[
V_{2q}^{-1}=\begin{bmatrix}
V_q^{-1}& -V_q^{-1}S_qV_q^{-1} \\ 0 & V_q^{-1}
\end{bmatrix}.
\]
Since the inverse of an upper triangular Toeplitz matrix is still upper
triangular and Toeplitz, it is sufficient to compute the first row
of $V_{2q}^{-1}$. The first half clearly coincides with the first
row of $V_q^{-1}$, the second half is given by
$
-e_1^T V_q^{-1}S_q V_q^{-1},
$ 
where $e_i$ is the vector with the $i$-th component equal to 1 and with
the remaining components equal to zero.

Thus the algorithm works this way: For a given (small) $q$ compute the
first $q$ components by solving the system $V_q^T x=e_1$. 
Then, by subsequent doubling steps, compute $2q$, $4q$,
$8q,\ldots$, components until some stop condition is
satisfied. Observe that, denoting $v_q(z)$ the polynomial obtained at
step $q$, the residual error $r_q(z)=a(z)v_q(z)-1$ can be easily
computed so that the stop condition $\|r_q\|\w\le\epsilon\|a\|\w$ can
be immediately implemented. Concerning the convergence speed, it must
be pointed out that the equation $r_{2q}(z)=r_q(z)^2$ holds true (see
\cite{bp:book}), implying that the convergence to zero of the norm of the
residual error is quadratic.

This approach has a low computational cost since the products Toeplitz
matrix by vector can be implemented by means of FFT for an overall
cost of the Sieveking-Kung algorithm of $O(n\log n)$ arithmetic
operations.

This algorithm, here described in matrix form, can be equivalently
rephrased in terms of polynomials and power series.

\subsection{The Wiener-Hopf factorization}\label{sec:wh}
We recall and synthesize the available algorithms for computing the
coefficients of the polynomials $u(z)$ and $l(z)$ such that
$a(z)=u(z)l(z^{-1})$ is the Wiener-Hopf factorization of $a(z)$.
Denote $\xi_i$ the zeros of $a(z)$ ordered so that
$|\xi_i|\le|\xi_{i+1}|$. This way, $|\xi_{n_+}|<1<|\xi_{1+n_+}|$,
moreover, $u(\xi_i)=0$ for $i=1,\ldots,n_+$ while $l(\xi_i^{-1})=0$
for $i=n_++1,\ldots,n_++n_-$.

A first approach is based on reducing the problem to solving a
quadratic matrix equation. Let $p\ge\max(n_-,n_+)$, reblock the
matrices in the equation $T(a)=T(u)T(l^{-1})$ into $p\times p$
blocks and obtain
\[
\begin{bmatrix}
A_0&A_1\\
A_{-1}&A_0&A_1\\
&\ddots&\ddots&\ddots
\end{bmatrix}
=\begin{bmatrix}
U_0&U_1\\&U_0&U_1\\
&&\ddots&\ddots
\end{bmatrix}
\begin{bmatrix}
L_0\\L_1&L_0\\
&L_1&L_0\\
&&\ddots&\ddots
\end{bmatrix}
\]
where, by using the \matlab\ notation, 
\[\begin{split}
&A_0=\hbox{toeplitz}([a_0,\ldots,a_{-p+1}],[a_0,\ldots,a_{p-1}]),\\
&A_1=\hbox{toeplitz}([a_p,a_{p-1},\ldots,a_1],[a_p,0,\ldots,0]),\\
&A_{-1}=\hbox{toeplitz}([a_{-p},0,\ldots,0],[a_{-p},\ldots,a_{-1}]),
\end{split}
\]
and $a_i=0$ if $i$ is out of range.

Set $W=U_0L_0$, $R=-U_1U_0^{-1}$, $G=-L_0^{-1}L_1$ and get the factorization
{\small
	\[
	\begin{bmatrix}
	A_0&A_1\\
	A_{-1}&A_0&A_1\\
	&\ddots&\ddots&\ddots
	\end{bmatrix}
	=\begin{bmatrix}
	I&-R\\&I&-R\\
	&&\ddots&\ddots
	\end{bmatrix}
	\begin{bmatrix}
	W\\ &W\\&&\ddots
	\end{bmatrix}
	\begin{bmatrix}
	I\\-G&I\\
	&-G&I\\
	&&\ddots&\ddots
	\end{bmatrix}.
	\]} 
Multiplying the above equation to the right by the block column vector
with entries $I,G,G^2,G^3,\ldots$ or multiplying to the left by the
block row vector with entries $I, R,R^2,R^3,\ldots$ one finds that the
matrices $R$ and $G$ are solutions of the equations
\begin{equation}\label{eq:mateq}
A_1G^2+A_0G+A_{-1}=0,\qquad R^2A_{-1}+RA_0+A_1=0
\end{equation}
and have eigenvalues $\xi_1,\ldots,\xi_{n_+}$ and $\xi_{n_+
  +1}^{-1},\ldots,\xi_{n_++n_-}^{-1}$, respectively, so that they have
spectral radius less than 1. For more details in this regard we refer
the reader to \cite{bgm:laa}.

Observe that, since
\[
G=-\begin{bmatrix}
l_0\\
l_1&l_0\\
\vdots&\ddots&\ddots\\
l_{p-1}&\ldots&l_1&l_0
\end{bmatrix}^{-1}\begin{bmatrix}
l_p&\ldots&l_1\\
&\ddots&\vdots\\
&      &l_p
\end{bmatrix}
\]
then $Ge_{p-n_-+1}=-l_{n_-} L_0^{-1}e_1$, while
$e_1^TG=-l_0^{-1}(l_p,\ldots,l_1)$.  That is, the first row of $G$
provides the coefficients of the factor $l(z)$ normalized so that
$l_0=-1$. 
Similarly, one finds that
$Re_1=-u_0^{-1}(u_p,u_{p-1},\ldots,u_1)^T$, and $e_{p-n_++1}^TR=-u_{n_+}
e_1^TU_0^{-1}$.  That is, the first column of $R$ provides the
coefficients of the factor $u(x)$ normalized so that $u_0=-1$. In order
to determine the normalizing constant $w$ such that
$a(z)=u(z)wl(z^{-1})$, it is sufficient to impose the condition $
u_{n_+} w l_0=a_{n_+}$ so that we can choose $w=-a_{n_+}/u_{n_+}$.

This argument provides the following algorithm to compute the
coefficients of $l(x)$ and of $u(x)$ such that
$a(z)=u(z)wl(1/z)$, where $u_0=l_0=-1$:

\begin{enumerate}
	\item Assemble the matrices $A_{-1}, A_0, A_1$. 
	\item Determine $R$ and $G$ that solve \eqref{eq:mateq} using cyclic reduction. 
	\item Compute $\hat u=Re_1$, set $u=(-1,\hat u_p,\ldots,\hat u_1)$ 
	  and  $\hat v=e_1^TG$, set $l=(-1,\hat v_p,\ldots, \hat v_1)$. 
	\item Set $w=-a_{n_+}/u_{n_+}$
\end{enumerate}

Observe that the above algorithm can be easily modified in order to
compute, for a given $q$, the first $q$ coefficients of the triangular
Toeplitz matrices $\mathcal U^{-1}$ ad $\mathcal L^{-1}$ such that
\[
\mathcal A^{-1}=\frac 1w\mathcal L^{-1}\mathcal U^{-1}.
\] 
In fact, the first $p$ coefficients, are given by 
\[\begin{split}
&L_0^{-1}e_1=-\frac 1{l_{n_-}}Ge_{p-n_-+1},\\
&e_1^TU_0^{-1}=-\frac1{u_{n_+}}e_{p-n_++1}^TR.
\end{split}
\]
While the remaining coefficients can be computed by means of the
Sieveking-Kung algorithm described in the previous section.

The method described in this section requires the computation of the
solutions $G$ and $R$ of equation \eqref{eq:mateq}. One of the most
effective methods to perform this computation is the Cyclic Reduction
(CR) algorithm. We refer the reader to \cite{bini2009cyclic} for a review
of this method, and to \cite{BM:simax} for the analysis of the
specific structural and computational properties of the matrices
generated in this way. Here we provide a short outline of the
algorithm, applied to the equations \eqref{eq:mateq} which we rewrite
in the form $AG^2+BG+C=0$ and $R^2C+RB+A=0$, respectively.  The
algorithm CR computes the following matrix sequences
\begin{equation}\label{eq:cr}
\begin{split}
&B^{(k+1)}=B^{(k)}-A^{(k)}S^{(k)}C^{(k)}-C^{(k)}S^{(k)}B^{(k)},\quad
S^{(k)}=(B^{(k)})^{-1},\\ &A^{(k+1)}=-A^{(k)}S^{(k)}A^{(k)},\quad
C^{(k+1)}=-C^{(k)}S^{(k)}C^{(k)},\\ &\widehat B^{(k+1)}=\widehat
B^{(k)}-C^{(k)}(B^{(k)})^{-1}A^{(k)},\quad \widetilde
B^{(k+1)}=\widetilde B^{(k)}-A^{(k)}(B^{(k)})^{-1}C^{(k)}.
\end{split}
\end{equation}

It is proved that under mild conditions the sequences can be computed
with no breakdown and that $\lim_{k} -A(\widehat B^{k)})^{-1} =R$,
$\lim_{k}-\widetilde (B^{(k)})^{-1}C=G$. More precisely, the following relations hold
\[\begin{split}
&G = -(\widetilde B^{(k)})^{-1}C-  \widetilde (B^{(k)})^{-1} A^{(k)}G^{2^k}\\
&R = -A(\widehat B^{(k)})^{-1}-  R^{2^k}C^{(k)}\widehat (B^{(k)})^{-1}
\end{split}
\]
and it can be proved that $\|(\widehat B^{(k)})^{-1}\|$ and $\|(\widetilde
B^{(k)})^{-1}\|$ are uniformly bounded by a constant and that
$A^{(k)}$, $C^{(k)}$ converge double exponentially to zero.  Since the
spectral radii of $R$ and of $G$ are less than 1, this fact implies
that convergence is quadratic. Moreover, the approximation errors
given by the matrices $ (\widetilde B^{(k)})^{-1} A^{(k)}G^{2^k}$ and
$ R^{2^k}C^{(k)}(\widehat B^{(k)})^{-1}$ is explicitely known in a
first order error analysis. In fact the matrices $ (\widetilde
B^{(k)})^{-1}$, $(\widehat B^{(k)})^{-1}$, $A^{(k)}$ and $C^{(k)}$
are explicitely computed by the algorithm and $G$ is approximated. 
This fact allows us to implement effectively the Wiener-Hopf computation required in the inversion
procedure described in Algorithm~1
of Section \ref{sec:inv}.

The cost of Cyclic Reduction is $O(p^3)$ arithmetic operations per step. 
In \cite{BM:simax} it is shown that all the above matrix sequences are
formed by matrices having displacement rank bounded by small
constants. This fact enables one to implement the above equation with
a linear cost, up to logarithmic factors,  by means of FFT.

\subsubsection{A different approach}
Another approach to compute the factor $l$ and $u$ relies on the
following property \cite{bibo}.

\begin{theorem} Let $a(z)^{-1}=h(z)=\sum_{i=-\infty}^{\infty}h_iz^i$.
	Define the Toeplitz matrix of size $q\ge \max(m,n)$
	$T_q=(h_{j-i})$. Then, $T_q$ is invertible and its last row and column
	define the coefficient vectors of $l(z)$ and $u(z)$, respectively
	up to a normalization constant.
\end{theorem}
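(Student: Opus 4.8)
The plan is to read the Wiener--Hopf factors $u$ and $l$ off the inverse $T_q^{-1}$ (it is really the inverse, whose existence the statement first asserts, whose last row and column carry the finitely many coefficients). Write the factorization as $a(z)=u(z)\,l(z^{-1})$ with $u,l$ nonvanishing on the closed unit disk, so that $v(z):=u(z)^{-1}$ and $w(z):=l(z)^{-1}$ are genuine power series in $\mathcal W$ (as in Theorem~\ref{thm:inv}), with geometrically decaying coefficients. Then $h(z)=a(z)^{-1}=v(z)\,w(z^{-1})$, and multiplying through by $u$, resp.\ $l(z^{-1})$, yields the two Laurent-series identities
\[
u(z)\,h(z)=w(z^{-1}),\qquad l(z^{-1})\,h(z)=v(z).
\]
The right-hand side of the first carries only nonpositive powers of $z$, that of the second only nonnegative powers; these two observations are the engine of the proof.

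Next I would turn the identities into statements about $T_q$. Define $\xi\in\mathbb C^q$ by $\xi_{q-k}=u_k$ for $0\le k\le n$ (zero otherwise), and $\eta\in\mathbb C^q$ by $\eta_{q-k}=l_k$ for $0\le k\le m$. A direct computation shows $(T_q\xi)_i=\sum_{j=1}^q h_{j-i}\xi_j$ equals the coefficient of $z^{\,q-i}$ in $u(z)h(z)=w(z^{-1})$; here the hypothesis $q\ge\max(m,n)\ge n$ is exactly what guarantees that the finite sum over $j\in\{1,\dots,q\}$ captures every nonzero $u_k$. Since $w(z^{-1})$ has no positive powers, this coefficient vanishes for $i<q$ and equals $w_0$ for $i=q$, so $T_q\xi=w_0 e_q$. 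Exchanging the roles of the two identities gives, symmetrically, $\eta^{T}T_q=v_0 e_q^{T}$. Consequently, once $T_q$ is invertible, its last column is $\xi/w_0$ and its last row is $\eta^{T}/v_0$: up to the normalizing constants $w_0=1/l_0$ and $v_0=1/u_0$, the last column lists the coefficients of $u$ and the last row those of $l$, which is the assertion.

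The main obstacle is the invertibility of $T_q$ for \emph{every} $q\ge\max(m,n)$. One route starts from the exact semi-infinite identity $T(h)=T(v)T(w)^{T}$ (a one-line consequence of Theorem~\ref{thm1}, since $v$ has no negative part), which upon truncation gives $T_q=T_q(v)\,T_q(w)^{T}+R$, where $T_q(v),T_q(w)^{T}$ are invertible triangular Toeplitz matrices (as $v_0,w_0\ne0$) and $R$ collects the discarded tails. Because the coefficients of $v,w$ decay geometrically, $R$ is a finite-rank correction localized in the lower-right corner, so invertibility of $T_q$ reduces to that of $I+(T_q(w)^{T})^{-1}T_q(v)^{-1}R$, a finite-rank perturbation of the identity; I expect pinning down this determinant to be the delicate point. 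A cleaner alternative is to argue directly on the kernel: if $T_qx=0$, encode $x$ as a polynomial $X(z)=\sum_{i=1}^q x_i z^i$, observe that $T_qx=0$ says the coefficients of $z^1,\dots,z^q$ in $h(z^{-1})X(z)$ all vanish, and multiply by the Laurent polynomial $a(z^{-1})=u(z^{-1})l(z)$ to recover $X$; the banded structure of $a$ together with $q\ge\max(m,n)$ then forces $X\equiv0$. Making this degree bookkeeping rigorous is the crux, but it simultaneously delivers the invertibility and legitimizes the reading-off of $u$ and $l$ established above.
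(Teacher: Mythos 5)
Your coefficient-extraction argument is, at its core, the same as the paper's: the paper writes $T(h)=T(u^{-1})T(l^{-1})^{T}$ and multiplies by the zero-padded coefficient vector of $l$ to get $T_q(h)\,(l_0,\dots,l_{q-1})^T=u_0^{-1}e_1$, which is exactly your identity $\eta^{T}T_q=v_0e_q^{T}$ read through the persymmetry $JT_qJ=T_q^{T}$ (indeed $J\eta=(l_0,\dots,l_{q-1})^T$; the paper lands on the first row and column, you on the last, and its displayed ``$=u_0^{-1}e_q$'' is evidently a slip for $e_1$). Your series-level bookkeeping is more explicit than the paper's one-line computation, and the reading-off of $u$ and $l$ from the last column and row of $T_q^{-1}$ is correct, conditional on invertibility.

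Invertibility is the genuine issue, and you rightly single it out --- but neither of your two sketches closes it (the paper's proof silently assumes it as well). Route (b) as stated fails: setting $c(z)=h(z^{-1})X(z)$, the hypothesis $T_qx=0$ gives $c_1=\dots=c_q=0$, and multiplying by $a(z^{-1})=\sum_{k=-n_+}^{n_-}a_{-k}z^{k}$ yields $x_p=\sum_{k}a_{-k}c_{p-k}$, which involves only the known-vanishing coefficients when $n_-+1\le p\le q-n_+$; the first $n_-$ and last $n_+$ entries of $x$ are untouched, so the kernel is not yet forced to be trivial. Route (a) is also problematic: the exact identity is $T_q(h)=T_q(v)T_q(w)^{T}+F$ with $F_{i,i'}=\sum_{j>q}v_{j-i}\,w_{j-i'}$, and $F$ is small only for large $q$, so a perturbation argument cannot cover every $q\ge\max(m,n)$. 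The good news is that the identities you have already proved suffice to finish: $T_q\xi=w_0e_q$ and, by persymmetry, $T_q(J\eta)=v_0e_1$ show that $T_qy=e_q$ and $T_qx=e_1$ are both solvable with $y_q=x_1=u_0l_0\neq 0$, and the Gohberg--Semencul theorem asserts that a Toeplitz matrix with this property is invertible (with an explicit formula for the inverse built from $x$ and $y$). Adding that one ingredient would make your argument complete --- and strictly more complete than the proof in the paper.
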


\begin{proof} The relation $a(z)^{-1}=l^{-1}(z^{-1})u^{-1}(z)$ can
be rewritten in matrix form as
\[
(h_{j-i})=T(u^{-1})T(l^{-1})^T.
\]
Multiply to the right by the infinite vector obtained by completing
$(l_{0},\ldots,l_{q-1})$ with zeros. Since the product of $T(l^{-1})^T$ with the latter is a
vector with all null components except  the first one, equal to 1, considering
$q$ components of the result yields
\[
T_q(h)(l_{0},\ldots,l_{q-1})^T=(T(u^{-1}))e_1=u_0^{-1} e_q
\]
whence we deduce that $(l_0,\ldots,l_{q-1})^T=T_q(h)^{-1}u_0^{-1}e_q$.
Similarly we do for the last row.
\end{proof}

This property is at the basis of the following computations

\begin{enumerate}
	\item Set $q=\max(m,n)$ compute $h_{i}$ for $i=-q,q$ such that $h(z)a(z)=1$ by means of evaluation/interpolation.
	
	\item Form $T_q(h)=(h_{j-i})_{i,j=1,q}$ and compute last row and last column of $T_q(h)^{-1}$.
\end{enumerate}

This algorithm may require a large number of interpolation points when
$a(z)$ has some zero of modulus close to 1, in the process of
evaluation/interpolation.

\subsubsection{Yet another approach}
The same property provides a third algorithm for computing $l(z)$
and $u(z)$ which relies on a different computation of $h_i$,
$i=-q,\ldots, q$. The idea is described below

Consider the equation
\[
a(z)h(z)=1.
\]
Multiply it by $a(-z)$ and, since $a(-z)a(z)=a_1(z^2)$, for a
polynomial $a_1(z)$, get
\[
a_1(z^2)h(z)=a(-z).
\]
Repeating the procedure $k$ times yields
\[
a_k(z^{2^k})h(z)=a_{k-1}(-z^{2^{k-1}})\cdots a_1(-z^2)a(-z).
\]
If $a(z)$ has roots of modulus different from 1, then $a_k(z)$ quickly
converges to either a constant or a scalar multiple of $z$, since its zeros are the $2^k$ powers of the
zeros of $a(z)$. In this case, $h(z)$ can be computed by means of a
product of polynomials with the same degree (independent of the
iterations).

\subsubsection{Newton's iteration}
Newton's iteration can be applied to the nonlinear system
$a(z)=u(z)l(z^{-1})$ where the unknowns are the coefficients of the
polynomials $u(z)$ and $l(z)$. The Jacobian matrix has a particular
structure given in terms of displacement rank which can be exploited
to implement Newton's iteration at a low cost. Details in this regard
are given in the papers \cite{boett} and \cite{boett1}.

\bibliographystyle{abbrv}
\bibliography{bibliography}
  
\end{document}